\documentclass{article}

\usepackage{url}

\usepackage{amsthm}
\usepackage{amsmath}
\usepackage{amssymb}
\usepackage{amsfonts}
\usepackage{enumitem}
\usepackage{cite}

\usepackage{adjustbox}
\usepackage {mathpartir}

\theoremstyle{definition}
\newtheorem{example}{Example}[section]
\newtheorem{definition}[example]{Definition}

\theoremstyle{definition}
\newtheorem{remark}[example]{Remark}

\theoremstyle{plain}
\newtheorem{theorem}[example]{Theorem}
\newtheorem{corollary}[example]{Corollary}
\newtheorem{prop}[example]{Proposition}
\newtheorem{lemma}[example]{Lemma}

\newcommand{\barrightarrow}{\relbar \joinrel \mapstochar \joinrel \rightarrow}
\newcommand{\hto}{\not \to}
\newcommand{\longhto}{\not \longrightarrow}

\newcommand{\pd}{\mathbf{pd}}
\newcommand{\el}{\operatorname{el}}
\newcommand{\id}{\operatorname{id}}
\newcommand{\glob}{\mathbb{G}}
\newcommand{\globSet}{\glob\text{-Set}}

\newcommand{\set}{\operatorname{Set}}
\newcommand{\op}{{\operatorname{op}}}
\newcommand{\nat}{\mathbb{N}}

\newcommand{\globGraph}{{\operatorname{GlobGraph}}}
\newcommand{\rGlobGraph}{\overline{\operatorname{GlobGraph}}}

\newcommand{\spanBatanin}{\operatorname{Span}}
\newcommand{\spanT}{\operatorname{Span}_{T}}

\newcommand{\globPlus}{\glob^{+}}
\newcommand{\limit}{\operatorname{lim}}

\newcommand{\IdType}{\mathcal{H}}

\newcommand{\Type}{\operatorname{Type}}

\newcommand{\monGlobCat}{{\operatorname{MonGlobCat}}}
\newcommand{\globMult}{{\operatorname{GlobMult}}}

\newcommand{\globMultId}{\operatorname{GlobMult}_{\IdType}}
\newcommand{\globMultStrId}{\operatorname{GlobMult}_{\IdType_{\operatorname{Str}}}}
\newcommand{\StrOmegaCat}{{\operatorname{Str}\omega\operatorname{-Cat}}}
\newcommand{\globT}{\mathbb{G}_T(\mathcal{T})}
\newcommand{\typeTheoryT}{\mathcal{T}}

\newcommand{\arrowTheoryT}{\mathbb{T}_0}

\newcommand{\FreeH}{F_{\IdType}}
\newcommand{\FreeCat}{{F_{\operatorname{Cat}}}}
\newcommand{\FreeCatStr}{{G}}

\newcommand{\strictify}{S}

\newcommand{\End}{\operatorname{End}}

\newcommand{\etaStr}{\eta}
\newcommand{\sourceContext}{d}
\newcommand{\outputType}{c}
\newcommand{\arrowTheory}{\mathbb{A}}

\newcommand{\boundaryMap}[2]{[{#1}, {#2}]}

\newcommand{\interpret}[1]{{#1}^{\circ}}

\usepackage{tikz-cd}

\tikzset{%
    symbol/.style={%
    draw=none,
        every to/.append style={%
        edge node={node [sloped, allow upside down, auto=false]{$#1$}}}
      }
    }

\title{Globular Multicategories with Homomorphism Types}
\author{Christopher J. Dean}
\date{}

\begin{document}

\maketitle

\abstract{
  We introduce a notion of globular multicategory with homomorphism types. These structures arise when organizing collections of ``higher category-like'' objects such as type theories with identity types. We show how these globular multicategories can be used to construct various weak higher categorical structures of types and terms.
}

\section{Introduction}

Suppose that we have a dependent type theory with identity types $\mathcal{T}$. Then both van den Berg and Garner \cite{Berg2011Types-are-weak-} and  Lumsdaine \cite{Lumsdaine2010Weak-omega-cate} have shown that the tower of identity types of each type in $\mathcal{T}$ has the structure of a Batanin-Leinster weak $\omega$-category (see \cite{Batanin1998Monoidal-Globul} and \cite{LeinsterHigher-Operads-}). As an intermediate step van den Berg and Garner construct a monoidal globular category of contexts and terms from $\mathcal{T}$.
Our goal in this paper is to understand and generalize this phenomenon.

In section \ref{backgroundSection}, we review Leinster's theory of generalized multicategories \cite{LeinsterHigher-Operads-} and focus on the free strict $\omega$-category monad on globular sets.
The associated notion of generalized multicategories are called \emph{globular multicategories}.
In the terminology of \cite{Cruttwell2010A-unified-frame} these are the ``virtual''  analogues of monoidal globular categories.)

We then describe a notion of homomorphism type within a globular multicategory in section \ref{sectionHomomorphismTypes}.
Every type theory with identity types gives rise to such a globular multicategory.
However, we also capture \emph{directed} examples such as the double category of categories, functors, profunctors and transformations.
In general we believe that objects in a globular multicategory with homomorphism types can be seen as ``higher category-like''.
Future work will make comparisons with structures arising in the study of directed homotopy type theory.

Section \ref{homotopy} is a brief interlude in section \ref{homotopy} to study the homotopy theory of globular multicategories. We develop the tools we require to study weak higher categorical structures in this setting,

Finally in section \ref{sectionWeakHigherCategories} we show that when a globular multicategory has homomorphism types,
there is a precise sense in which:
\begin{itemize}
  \item types behave like weak higher categories
  \item dependent types behave like profunctors
  \item terms behave like higher functors and transformations between profunctors
  \item the collection of types and terms has the structure of a weak $\omega$-category.
\end{itemize}
These results generalize those of van den Berg and Garner \cite{Berg2011Types-are-weak-} and Lumsdaine \cite{Lumsdaine2010Weak-omega-cate, LumsdaineHigher-categori}.
Weak higher categorical structures based on globular multicategories have previously been studied by Kachour. (See for instance \cite{Kachour2015Operads-of-high}.)

\section{Background} \label{backgroundSection}
\subsection{Globular Pasting Diagrams}

\begin{definition}
  The category of  \textbf{globes} $\glob$ is freely generated by the morphisms
  \begin{equation*}
    \begin{tikzcd}
      0 \ar[shift left]{r}{\sigma_0} \ar[swap, shift right]{r}{\tau_0}&
      1 \ar[shift left]{r}{\sigma_1} \ar[swap, shift right]{r}{\tau_1}&
      \cdots
      \ar[shift left]{r}{\sigma_{n-1}} \ar[swap, shift right]{r}{\tau_{n-1}}&
      n \ar[shift left]{r}{\sigma_n} \ar[swap, shift right]{r}{\tau_n}&
      \cdots
    \end{tikzcd}
  \end{equation*}
  subject to the \textbf{globularity} conditions
  \[
    \begin{aligned}
      \sigma_{n+1} \circ \sigma_n &= \tau_{n+1} \circ \sigma_n\\
      \sigma_{n+1} \circ \tau_{n} &= \tau_{n+1} \circ \tau_{n}
    \end{aligned}
  \]
  which ensure that for each $k < n$,
  there are exactly two composite arrows in $\glob$
  of the form
  \[
    \begin{aligned}
      \sigma_k : k \longrightarrow n\\
      \tau_k : k \longrightarrow n
    \end{aligned}
  \]

  A \textbf{globular object} in a category $\mathcal{C}$ is a functor $ A : \glob^{\op} \longrightarrow \mathcal{C}$.
  We denote the image of $\sigma_n : n \to n+1$ under such a functor by
  \begin{equation*}
    \begin{tikzcd}
      A(n)
      \ar[shift left]{r}{s_{k}} \ar[swap, shift right]{r}{t_{k}}&
      A(k)
    \end{tikzcd}
  \end{equation*}
  A presheaf on $\glob$ is called \textbf{globular set}.
  In this case,
  we refer to the elements of $A(n)$ as $n$-\textbf{cells}
  and think of the maps $s_k, t_k$ as describing \textbf{source} and \textbf{target} $k$-cells.
\end{definition}

We now define a notion of $n$-\textbf{pasting diagram} for each $n$.
We will inductively define a globular set $\pd$ whose $n$-cells are $n$-pasting diagrams.

\begin{definition}
  We define  the globular set $\pd$
  together with, for each $k < n$ and each $\pi \in \pd(n)$,
  boundary inclusions
  \[
    \begin{aligned}
      \sigma_k : s_k \pi &\longrightarrow \pi\\
      \tau_k : t_k \pi &\longrightarrow \pi
    \end{aligned}
  \]
  inductively:
  \begin{itemize}
    \item Every globe $n$,
    identified with its image under the Yoneda embedding,
    is an $n$-pasting diagram.
    In this case $\sigma_k$ and $\tau_k$ are induced by the
    corresponding maps in $\glob$.

    \item Whenever $\pi$ is an $n$-pasting diagram,
    there is an $(n+1)$-pasting diagram $\pi^+$
    whose underlying globular set is just $\pi$.
    We define the boundary inclusions to be the identity map:
    \[
      \sigma_k = \tau_k = \id_\pi.
    \]

    \item
    Given $n$-pasting diagrams $\pi_1, \pi_2$ such that $t_k \pi_1 = s_k \pi_2 = \rho$
    the pushout $\pi_1 +_k \pi_2$ of the following diagram
    \begin{equation*}
      \begin{tikzcd}
        \rho \ar{r}{\tau_k}
        \ar[swap]{d}{\sigma_k}
        &
        \pi_1
        \ar{d}{}
        \\
        \pi_2
        \ar[swap]{r}{}
        &
        \pi_1 +_k \pi_2
      \end{tikzcd}
    \end{equation*}
    is a globular pasting diagram.
    In other words we can  ``glue pasting diagrams along shared $k$-cells''.
    When $j < k$,
    we define $\sigma_j$, $\tau_j$
    to be the composites:
    \[
      \begin{aligned}
        s_j \rho &\overset{\sigma_j}{\longrightarrow} \rho
        &\longrightarrow \pi_1 +_k \pi_2
        \\
        t_j \rho &\overset{\tau_j}{\longrightarrow} \rho
        &\longrightarrow \pi_1 +_k \pi_2
      \end{aligned}.
    \]
    When $j = k$,
    we define $\sigma_j$,
    $\tau_j$
    to be the composites
    \[
      \begin{aligned}
        s_j \pi_1 &\overset{\sigma_j}{\longrightarrow} \pi_1
        &\longrightarrow \pi_1 +_k \pi_2\\
        t_j \pi_2 &\overset{\tau_j}{\longrightarrow} \pi_2
        &\longrightarrow \pi_1 +_k \pi_2
      \end{aligned}
    \]
    When $j > k$,
    we define
    \[
      \begin{aligned}
        s_j (\pi_1 +_k \pi_2) &= (s_j \pi_1) +_k (s_j \pi_2)\\
        t_j (\pi_1 +_k \pi_2) &= (t_j \pi_1) +_k (t_j \pi_2)
      \end{aligned}
    \]
    and $\sigma_j, \tau_j$
    are induced by the universal properties of these pushouts.
  \end{itemize}

  This notion of pasting diagram allows us to describe the free strict $\omega$-category monad $T : \globSet \to \globSet$ explicitly.
  For any globular set $A$,
  an $n$-cell of $TA$ consists of
  a pasting diagram $\pi \in \pd(n)$
  together with a map
  \[
    \begin{aligned}
      \pi \to A
    \end{aligned}.
  \]
  Thus, we refer to these maps as \textbf{pasting diagrams} in $A$.
  Suppose that $f, g \in TA(n)$
  are $n$-cells such that such that $t_k f = s_k g$.
  Then we have corresponding pasting diagrams
  \[
    \begin{aligned}
      f : \pi_1 \to A\\
      g : \pi_2 \to A
    \end{aligned}
  \]
  such that $f \tau_k = g \sigma_k$.
  We define $f \odot_{k} g$ to be the induced map
  \[
    \begin{aligned}
      \pi_1 +_{k} \pi_2 \to A
    \end{aligned}.
  \]
  This defines another element of $TA(n)$.
  More generally
  suppose that we have a pasting diagram $\Gamma : \rho \to TA$.
  Then the multiplication of $T$
  gives us a new pasting diagram which we denote
  \[
    \begin{aligned}
      \bigodot_{i \in \rho} \Gamma_i
    \end{aligned}.
  \]
  Suppose that $\rho$ is the $n$-globe, thought of as an $k$-pasting diagram for some $k \geq n$.
  Then $\Gamma$ is uniquely determined by the unique $n$-cell $M \in \Gamma(n)$.
  Hence we write
  \[
    \begin{aligned}
      \Gamma = [M]
    \end{aligned}
  \]
  and we have that
  \[
    \begin{aligned}
      \bigodot_{i \in \rho} \Gamma_i = M
    \end{aligned}.
  \]
\end{definition}

A crucial property of the monad $T$ is that it is \emph{cartesian}; i.e.,
its underlying functor preserves pullbacks
and the naturality squares of its unit and multiplication are pullback squares
(see  \cite{LeinsterHigher-Operads-}).
Following Leinster \cite{LeinsterHigher-Operads-},
this allows us to define a notion of generalized multicategory.

\subsection{Globular Multicategories}

We will assume for the rest of this paper that $T$ is the free strict $\omega$-category monad.
In this section we  review Leinster's \cite{LeinsterHigher-Operads-} theory of $T$-multicategories.

\begin{definition}
  A \textbf{$T$-span} is a span of the following form.
  \begin{equation*}
    \begin{tikzcd}
      & X \ar{dl} \ar{dr}\\
      TA && B
    \end{tikzcd}
  \end{equation*}
  We can compose $T$-spans
  \begin{equation*}
   \begin{tikzcd}
    & X \ar[swap]{dl}{a}\ar{dr}{b}& &&& Y\ar[swap]{dl}{b'}\ar{dr}{c}\\
    TA && B && TB && C\\
  \end{tikzcd}
\end{equation*}
by computing a pullback as in the following diagram
\begin{equation*}
 \begin{tikzcd}
  && TX \times_{TB} Y \ar{dl}\ar{dr}\\
  & TX \ar[swap]{dl}{Ta}\ar{dr}{Tb}& & Y\ar[swap]{dl}{b'}\ar{dr}{c}\\
  T^{2}A \ar[swap]{d}{\mu}&& TB  && C\\
  TA
\end{tikzcd}
\end{equation*}
Let $\eta_X$ be the unit of $T$ at $X$.
Then the identity $T$-span at $X$ is the following diagram:
\begin{equation*}
 \begin{tikzcd}
  & X \ar[swap]{dl}{\eta_X}\ar{dr}{\id_X}&\\
  TX && X
\end{tikzcd}
\end{equation*}
Putting all this data together we obtain a bicategory $T$\textbf{-Span}.
\end{definition}

This allows us to present our main definition succinctly.

\begin{definition}
  A \textbf{globular multicategory} is a monad in the bicategory $T$\textbf{-Span}.
\end{definition}

Suppose that $X$ is a globular multicategory.
Then there is a $T$-span of the following form:
\begin{equation*}
  \begin{tikzcd}
    & X_1 \ar[swap]{dl}{\sourceContext} \ar{dr}{\outputType}\\
    TX_0 && X_0
  \end{tikzcd}
\end{equation*}
We refer to these $T$-spans as \textbf{globular multigraphs} .
We will use type theoretic terminology to refer to the data contained in $X$:

\begin{itemize}
  \item An $n$-\textbf{type} is an element of $X_0(n)$.
  When $sM = A$ and $tM$ = B,
  we write
  \[
    \begin{aligned}
      M : A \barrightarrow B
    \end{aligned}.
  \]

  \item A $\pi$-shaped \textbf{$n$-context} $\Gamma = (\Gamma_i)_{i \in \el(\pi)}$ is an element of $TX_0 (n)$ of the form
  \[
    \Gamma : \pi \longrightarrow X_0
    .\]
    A \textbf{variable} in $\Gamma$ is an element $x \in \el(\pi)$.
    When $A = \Gamma_x$, we say that $A$ is the \emph{type} of $x$ and write
    \[
      \begin{aligned}
        x : A
      \end{aligned}.
    \]

    \item An $n$-\textbf{term} $f$ is an element of $X_1(n)$.
    Suppose that $\Gamma = \sourceContext f$ is a $\pi$-shaped $n$-context.
    Let $A = \outputType f$ be an $n$-type.
    Then $f$ can be thought of as a generalized arrow sending
    a $\pi$-shaped $n$-context $\Gamma$ (a pasting diagrams of typed input variables)
    to an $n$-type $A$.
    For this reason we say that $f$ is $\pi$-shaped and write
    \[
      f : \Gamma \longrightarrow A
    \]
    When $n \geq 0$, $n$-terms also have source and target $(n-1)$-terms $sf$ and $tf$.

    \item A \textbf{substitution} or \emph{context morphism} is an element $f \in TX_1(n).$
    For some $\pi \in \pd(n)$,
    $f$ is a collection of terms
    \[
      f_i : \Gamma_i \longrightarrow \Delta_i
    \]
    for $i \in \el(\pi)$.
    (That is a $\pi$-shaped pasting diagram of terms.)
    By pasting together the domain contexts of these terms we obtain a context
    \[
      \begin{aligned}
        \Gamma = \bigodot_{i \in \pi} \Gamma_i
      \end{aligned}
    \]
    By pasting together the codomain types we obtain a $\pi$-shaped context
    \[
      \begin{aligned}
        \Delta = \bigodot_{i \in \pi} \Delta_i
      \end{aligned}
    \]
    In this case,
    we write
    \[
      f : \Gamma \longrightarrow \Delta
    \].

    \item Suppose that we have a substitution $f : \Gamma \to \Delta$
    and a term $g : \Delta \to A$.
    Then the multiplication of $X$
    allows us to define a \textbf{composite} term
    \[
      f ; g : \Gamma \longrightarrow \Delta
      .\]
      We think of $f ; g$
      as the result of substituting $f_i$
      for the variable $i$ into (the domain context of) $g$.
      Now suppose that $h : \Delta \to E$ is a substitution.
      Then we define
      \[
        \begin{aligned}
         f ; h : \Gamma \longrightarrow E
       \end{aligned}
     \]
     to be the substitution such that
     \[
      (f ; h)_j = (f_{i_j})_{i_j \in d h_j} ; h_j
    \].

    \item  For each $n$-type $A$,
    the unit of $X$ gives us
    an \textbf{identity} $n$-term
    \[
      \begin{aligned}
        \id_A : [A] \to A
      \end{aligned}
      .\]
      For each context $\Gamma$,
      we define
      \[
        \begin{aligned}
          \id_\Gamma : \Gamma \to \Gamma
        \end{aligned}
      \]
      to be the substitution such that
      \[
        \begin{aligned}
          (\id_\Gamma)_i = \id_{\Gamma_i}
        \end{aligned}
      \]
      We think of this as the \emph{trivial substitution}.

      \item Associativity of $X$ tells us that,
      whenever it makes sense,
      \[
        \begin{aligned}
          (f ; g) ; h = f ; (g ; h)
        \end{aligned}
      \]
      \item The unit laws of $X$ tell us that,
      whenever it makes sense,
      \[
        \begin{aligned}
          f ; \id_A = f = \id_\Gamma ; f
        \end{aligned}
      \]
    \end{itemize}

    \begin{example}
      Batanin's \cite{Batanin1998Monoidal-Globul} globular operads are a particular important class of globular multicategories. A \textbf{globular operad} is a globular multicategory $X$ with $X_0 = \top$, the terminal globular set.
      In other words a globular operad has a unique $n$-type for each $n \in \glob$.

      The terminal globular multicategory (operad) has a unique $\pi$-shaped $n$-term for each $\pi \in \pd(n)$,.
      Algebras of this operad  are closely related to strict $\omega$-categories.
    \end{example}

    \begin{example}
      Let $\mathcal{C}$ be a category with pullbacks.
      There is a globular multicategory $\spanT(\mathcal{C})$ such that:

      \begin{itemize}
        \item An $n$-type is a functor $A : \el(n)^{\op} \to \mathcal{C}$ from the category of elements of the representable globular set $n$.

        \item Given a pasting diagram $\pi \in \pd(n)$,
        a context with shape $\pi$ is a functor
        \[
          \Gamma : \el(\pi)^{\op} \longrightarrow \mathcal{C}
          .\]
          Associated to such a context there is a canonical functor $\Gamma_0 : \el(n)^{\op} \to A$, which sends
          \begin{align*}
            s : m \to n \in \glob &&\longmapsto&&
            \
            \limit \left(
            \el(s \pi)^{\op} \overset{\el(s)^{\op}}{\longrightarrow}
            \el(\pi)^{\op} \overset{\Gamma}{\longrightarrow}
            \mathcal{C}
            \right)
          \end{align*}
          and sends arrows to the canonical morphisms induced between these limits.

          \item A term $u : \Gamma \to A$ in $\spanT(\mathcal{C})$ is a natural transformation $\Gamma_0 \to A$.
        \end{itemize}
      \end{example}

      \begin{remark}
        We refer to functors $\el(n)^{\op} \to \mathcal{C}$ as \textbf{$n$-spans} in $\mathcal{C}$ (see \cite{Batanin1998Monoidal-Globul}).
        The globular multicategory $\spanT(\mathcal{C})$ underlies the monoidal globular category $\spanBatanin(\mathcal{C})$ described ibid.
        A $1$-span is a span in the usual sense.
        More generally,
        an $(n+1)$-span is a ``span between $n$-spans''.
        For example a $3$-span is a diagram of the following form:
        \begin{equation*}
          \begin{tikzcd}
            & \bullet \ar{dl} \ar{dr} &
            \\
            \bullet \ar{d} \ar{drr} &&
            \bullet \ar{d} \ar{dll}
            \\
            \bullet \ar{d} \ar{drr}&&
            \bullet \ar{d} \ar{dll}
            \\
            \bullet &&
            \bullet
          \end{tikzcd}
        \end{equation*}
      \end{remark}

      \begin{definition}
        A \textbf{map} of globular multigraphs is a pair of arrows $f_0, f_1$ making the following diagram commute:
        \begin{equation*}
          \begin{tikzcd}
            & X_1\ar[swap]{dl}{\sourceContext}\ar{dr}{\outputType}\ar{ddd}{f_1} &\\
            TX_0 \ar[swap]{ddd}{Tf_0} && X_0\ar{ddd}{f_0}\\
            \\
            & Y_1\ar{dl}{\sourceContext}\ar[swap]{dr}{\outputType} &\\
            TY_0 && Y_0\\
          \end{tikzcd}
        \end{equation*}
        A \textbf{homomorphism} of globular multicategories is a map of globular multigraphs preserving the multiplication and unit of $X$.
        We denote the category of globular multicategories and homomorphisms by $\globMult$.
      \end{definition}

      A globular multicategory can be also be seen as an \emph{algebraic theory} whose operations have arities which are shapes of pasting diagrams.
      For this reason we also think of homomorphisms $X \to Y$ as \emph{algebras} of $X$ in $Y$.
      \begin{remark}
        Algebras of the terminal globular operad in $\spanT(\mathcal{C})$ are \emph{strict $\omega$-categories internal to $\mathcal{C}$}.
        In particular,
        algebras of the terminal globular operad in $\spanT(\set)$ are strict $\omega$-categories.
      \end{remark}

      \begin{definition}
        Let  $X : \glob^\op \to \mathcal{C}$ be a globular object in $\mathcal{C}$.
        The \textbf{endomorphism operad} $\End(X)$
        is the collection of terms in $\spanT{\mathcal{C}}$
        whose types come from $X$
        (see \cite{Batanin1998Monoidal-Globul}).
        To be precise,
        let $\hat X : \globSet \to \mathcal{C}$ be the canonical cocontinuous extension of $X$.
        Then a term $f : \pi \to n$ in $\End(X)$
        is a term
        \[
          \begin{aligned}
            \hat X(\pi) \longrightarrow X(n)
          \end{aligned}
        \]
        in $\spanT(\mathcal{C})$ respecting boundaries.
        This is the objects part of a functor
        \[
          \begin{aligned}
            \End : \mathcal{C}^{\glob^\op} \longrightarrow \globMult
          \end{aligned}
        \]

      \end{definition}

      Every globular multigraph can be viewed as a presheaf over a category whose objects are shapes of types and terms.

      \begin{definition}
        The category $\globPlus$ of \textbf{generic} types and terms is defined as follows:
        Its set of objects is the the coproduct of sets
        \begin{align*}
          \glob + \el(\pd)
        \end{align*}.
        In this case, we say that $n \in \glob$ is the \emph{generic n-type}
        and that $\pi \in \pd(n)$ is the \emph{generic $\pi$-shaped $n$-term}.
        There are four classes of arrows in $\globPlus$:
        \begin{itemize}
          \item Every arrow in $\glob$ induces a corresponding arrow between generic types in $\globPlus$.
          These arrows pick out the source and target types of generic types.

          \item Every arrow $\el(\pd)$ induces a corresponding arrow between generic terms in $\globPlus$.
          These arrows pick out the source and target terms of generic terms.

          \item Let $u$ be the generic $\pi$-shaped $n$-pasting diagram
          and let $k \in \glob$.
          Then each map of globular sets $k \to \pi$
          induces an arrow
          \[
            \begin{aligned}
              k \to u
            \end{aligned}
          \]
          in $\globPlus$ from the generic $k$-type to the generic $\pi$-shaped term $u$.
          These arrow pick out the types in the domain contexts of generic terms.

          \item  For each generic $n$-term $u$
          and each arrow $k \to n$ in $\glob$
          there is an arrow
          \[
            \begin{aligned}
              k \to u
            \end{aligned}
          \]
          from the generic $k$-type to the generic $n$-term $u$.
          These arrows pick out the codomain types of generic terms.
        \end{itemize}
      \end{definition}

      \begin{prop}
        The category of globular multigraphs $\globGraph$ is equivalent to the category of presheaves over $\globPlus$.
      \end{prop}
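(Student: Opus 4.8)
The plan is to unwind both categories into the same concrete data and to match the generating arrows of $\globPlus$ with the structure maps of a globular multigraph. Recall first that an $n$-cell of $TX_0$ consists of a pasting diagram $\pi \in \pd(n)$ together with a map of globular sets $\pi \to X_0$. Consequently every $n$-term $f \in X_1(n)$ has a well-defined \emph{shape}, namely the pasting diagram underlying $\sourceContext f$; since $\sourceContext$ is a map of globular sets, the assignment of shapes is itself a map $X_1 \to \pd$, and therefore $X_1(n)$ splits as $\coprod_{\pi \in \pd(n)} X_1^{\pi}(n)$, where $X_1^{\pi}(n)$ is the set of $n$-terms of shape $\pi$, compatibly with source and target.

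I would then define a functor $\Psi$ from $\globGraph$ to the presheaves over $\globPlus$ by setting $\Psi(X)(n) = X_0(n)$ on a generic type $n \in \glob$ and $\Psi(X)(\pi) = X_1^{\pi}(n)$ on a generic $\pi$-shaped term. The four classes of arrows act as the four structure maps: arrows of the first class via the globular structure of $X_0$; arrows of the second class via the globular structure of $X_1$ (the boundary operators of $\pd$ acting on shapes); an arrow of the third class, corresponding to a $k$-cell $x$ of $\pi$, via $f \mapsto (\sourceContext f)_x \in X_0(k)$, the type of the variable $x$; and an arrow of the fourth class, corresponding to a map $k \to n$ in $\glob$, via $f \mapsto$ the relevant boundary of the output type $\outputType f$. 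In the reverse direction I would define $\Phi$ by $X_0 = P|_{\glob}$ and $X_1(n) = \coprod_{\pi \in \pd(n)} P(\pi)$, with globular structure read off from the second class, the map $\outputType$ from the fourth class, and the map $\sourceContext$ from the third. On morphisms, a map of globular multigraphs $(f_0, f_1)$ preserves shapes automatically, since $Tf_0$ fixes the pasting diagram underlying a cell of $TX_0$, so $f_1$ restricts over each shape and yields the components of a natural transformation; this makes the correspondence functorial in both directions.

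The step I expect to require the most care is checking that the \emph{relations} in $\globPlus$ reproduce exactly the conditions defining a globular multigraph, so that $\Phi$ and $\Psi$ are mutually inverse equivalences. Two points are essential. First, composing a third- or fourth-class arrow with an arrow of $\glob$ must force the assigned types $(\sourceContext f)_x$, together with the boundaries of $\outputType f$, to assemble into natural families; this is precisely the cone condition witnessing that a cell of $TX_0$ is a genuine map $\pi \to X_0$ and not merely an unorganized family of typed cells, and it is exactly the content of the explicit description of $TX_0$ recalled above. Second, composing second-class arrows (the boundaries of terms) with third- and fourth-class arrows must recover the naturality squares expressing that $\sourceContext$ and $\outputType$ are morphisms of globular sets. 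Once these composites are matched against the globular-map conditions on $\sourceContext$ and $\outputType$, no further data or relations remain on either side, and the claimed equivalence follows.
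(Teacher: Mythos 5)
Your argument is correct, but it takes a genuinely different route from the paper. The paper does not construct the equivalence at all: its proof is a citation of two general results of Leinster (Propositions C.3.4 and 6.5.6 of \emph{Higher Operads, Higher Categories}), which together say that for a suitable (familially representable) cartesian monad on a presheaf category, the category of $T$-graphs is again a presheaf category, over a category built exactly like $\globPlus$. Your proof unwinds that machinery by hand in the one case needed here: your opening observation that an $n$-cell of $TX_0$ is a pair $(\pi, \Gamma : \pi \to X_0)$, so that $X_1$ splits into shape-fibres $X_1^{\pi}(n)$, is precisely the familial representability of the free strict $\omega$-category monad on which the cited propositions rest, and the rest of your dictionary (types of variables via third-class arrows, boundaries of $\outputType f$ via fourth-class arrows, shape-preservation of morphisms because $Tf_0$ fixes underlying pasting diagrams) is the explicit form of the resulting equivalence. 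What the citation buys is brevity and generality; what your route buys is a self-contained argument and an explicit description of the presheaf attached to a multigraph, which is what the paper actually exploits afterwards (the functor $\dim$, the boundaries $\partial u$, and the cofibrantly generated factorization system of Section 4). One caveat: the paper's definition of $\globPlus$ lists the four classes of generating arrows but never spells out how they compose, so the ``relations'' you defer to your final paragraph cannot be checked against an independently given composition law; rather, the compatibilities you identify (composites of third- and fourth-class arrows with arrows of $\glob$ encoding the cone condition for a genuine map $\pi \to X_0$, and composites with second-class arrows encoding globularity of $\sourceContext$ and $\outputType$) are exactly how composition in $\globPlus$ must be defined, and once this is made explicit your $\Phi$ and $\Psi$ are mutually inverse up to the canonical isomorphism $X_1 \cong \coprod_{\pi} X_1^{\pi}$, which is all the stated equivalence requires.
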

      \begin{proof}
        This follows from  \cite[Proposition C.3.4 and Proposition 6.5.6]{LeinsterHigher-Operads-}.
      \end{proof}

      \begin{remark}\label{globPlusDirect}
      The category $\globPlus$ is a \emph{direct category}.
      There is an identity-reflecting functor $\dim : \globPlus \to \nat$ which sends
      the generic $n$-types and all generic $n$-terms to the natural number $n$.
      Let $X$ be a globular multigraph.
      Let $u \in \globPlus$ be a generic type or term,
      identified with its image under the Yoneda embedding.
      Then the \textbf{boundary} $\partial u$ is the subpresheaf of $u$ such that
      \[
        \begin{aligned}
          w \in \partial u(v) \iff \dim v < \dim u
        \end{aligned}.
      \]
      We will denote the canonical inclusion by
      \[
        \begin{aligned}
          \iota_u : \partial u \longrightarrow u
        \end{aligned}.
      \]
      In Section \ref{homotopy} we will use these boundary inclusions to describe a higher dimensional notion of weakness.
    \end{remark}

    \section{Homomorphism Types}\label{sectionHomomorphismTypes}

    \begin{definition}
      We say that a globular multigraph is \textbf{reflexive}
      when for each $n$-type $A$,
      we have an ``identity'' $(n+1)$-type
      \[
        \begin{aligned}
          \IdType_A : A \barrightarrow A
        \end{aligned}
      \]
      and a \textbf{reflexivity $(n+1)$-term}
      \[
        \begin{aligned}
          r_A : A \to \IdType_A
        \end{aligned}
      \]
    \end{definition}

    \begin{definition}
      Let $0 \leq k < n$.
      Suppose that $\Gamma$ is a $\pi$-shaped $n$-context
      in a reflexive globular multigraph.
      Then for any $k$-variable $x : A$ in $\Gamma$,
      since $\IdType_A : A \hto A$,
      we can form a new context $\Gamma \oplus \IdType_x$ by ``adding an homomorphism type at $x$''.
      \[
        \begin{aligned}
          \Gamma \oplus \IdType_x
          =
          \bigodot_{i \in \el{\pi}}
          \begin{cases}
            \Gamma_i & \text{if $i \neq x$}\\
            \IdType_{\Gamma_x} & \text{if $i = x$}
          \end{cases}
        \end{aligned}
      \]
      Similarly there is a canonical reflexivity substitution
      $r^\Gamma_x : \Gamma \to \Gamma \oplus \IdType_x$
      defined by
      \[
        \begin{aligned}
          r^\Gamma_x
          =
          \bigodot_{i \in \el{\pi}}
          \begin{cases}
            \id_i & \text{if $i \neq x$}\\
            r_x & \text{if $i = x$}
          \end{cases}
        \end{aligned}
      \]
      When $\Gamma$ is clear from the context we will denote this substitution by $r_x$.
    \end{definition}

    \begin{definition}
      We say that a globular multicategory has \textbf{homomorphism types}
      when its underlying globular multigraph is reflexive and we have the following  structure:
      \begin{itemize}
       \item For each $n$-term $f : \Gamma \to A$
       and each $(n-1)$-variable $x$,
       there is a \textbf{J-term}
       \[
        \begin{aligned}
          J_x(f) : \Gamma \oplus \IdType_x \to A
        \end{aligned}
      \]
      such that
      \[
        \begin{aligned}
          r_x ; J_x(f) = f
        \end{aligned}
        .\]

        \item Suppose that $0 < k < n$.
        Let $f : \Gamma \to A$ be an $n$-term.
        Then for each $k$-variable $x$
        and any
        \[
          \begin{aligned}
            j_s :  s\Gamma \oplus \IdType_x \to sA\\
            j_t : t\Gamma \oplus \IdType_x \to tA
          \end{aligned}
        \]
        such that
        \[
          \begin{aligned}
            {r_x ; j_s = sf}\\
            {r_x ; j_t = tf}
          \end{aligned}
        \]
        we have a \textbf{J-term}
        \[
          \begin{aligned}
            J^{j_s, j_t}_x(f) : \Gamma \oplus \IdType_x \to A
          \end{aligned}
        \]
        such that
        \[
          \begin{aligned}
            s  J^{j_s, j_t}_x(f) &= j_s\\
            t J^{j_s, j_t}_x(f) &= j_t
          \end{aligned}
        \]
        and
        \[
          \begin{aligned}
            r_x ; J^{j_s, j_t}_x(f) = f
          \end{aligned}
        \]
      \end{itemize}
    \end{definition}

    \begin{example}
      Every type theory with identity types $\typeTheoryT$ induces a globular multicategory $\globT$ with homomorphism types. The construction essentially follows  van den Berg and Garner \cite{Berg2011Types-are-weak-}. We have that:
      \begin{itemize}
        \item A $0$-type $A$ in $\globT$ is a type
        \[
          \begin{aligned}
            \vdash \interpret A : \Type
          \end{aligned}
        \]

        in $\typeTheoryT$.
        \item An $(n+1)$-type $M : A \hto B$ in $\glob T$ is a dependent type judgement
        \[
          \begin{aligned}
            x : \interpret A, y : \interpret B \vdash \interpret M(x, y) : \Type
          \end{aligned}
        \]
        in $\typeTheoryT$.

        \item Each globular context $\Gamma : s\Gamma \hto t\Gamma$ in $\globT$
        corresponds to a list of dependent types in $\typeTheoryT$ and thus induces a dependent context
        \[
          \begin{aligned}
            \interpret {s\Gamma},  \interpret{t \Gamma} \vdash \interpret{\Gamma}
          \end{aligned}
        \]
        in $\typeTheoryT$.

        \item A $0$-term in $f : \Gamma \to A$ in $\globT$ is a term
        \[
          \vec x : \interpret \Gamma \vdash \interpret f \vec x : \interpret A
        \]
        in $\typeTheoryT$.

        \item Suppose that $\Gamma$ is an $(n+1)$-context such that
        $\vec x_s : \interpret{s \Gamma}, \vec x_t : \interpret{t \Gamma} \vdash \interpret  \Gamma(\vec x_s, \vec x_t)$ is a dependent context in $\typeTheoryT$.
        Then an $(n+1)$-term in $f : \Gamma \to A$ in $\globT$ is a term
        \[
          \begin{aligned}
            \vec x : \interpret \Gamma(\vec x_s, \vec x_t)
            \vdash \interpret f \vec x
            : \interpret A(\interpret{sf}(\vec{x_s}), \interpret{tf}(\vec{x_t}))
          \end{aligned}
        \]
        in $\typeTheoryT$.

        \item Composition of terms in $\globT$ is defined by substitution in $\typeTheoryT$.

        \item The homomorphism types, reflexivity terms and J-terms in $\globT$ are the corresponding terms in $\typeTheoryT$.

      \end{itemize}
    \end{example}

    \begin{example}
      In forthcoming work we will construct a globular multicategory with homomorphism types whose:
      \begin{itemize}
        \item $0$-types are strict $\omega$-categories
        \item $(n+1)$-types are profunctors between $n$-types
        \item $0$-terms are strict $\omega$-functors
        \item $(n+1)$-terms are transformations between profunctors
        \item Homomorphism type data comes from a version of the Yoneda Lemma for strict $\omega$-categories
      \end{itemize}
    \end{example}

    \begin{definition}
      A \textbf{globular category} $A$ is a globular object in the category of categories.
      Following Batanin \cite{Batanin1998Monoidal-Globul}
      a \textbf{monoidal globular category} is an $\omega$-category internal to the category of globular categories up to isomorphism.
      This amounts to a globular category $A$ together with:
      \begin{itemize}
        \item for each $k < n$,
        composition functors
        \[
          \begin{aligned}
            \otimes_k : A(n) \times A(n) \longrightarrow A_n
          \end{aligned}
        \]

        \item for each $n$, a unit functor
        \[
          \begin{aligned}
            Z : A(n) \longrightarrow A(n-1)
          \end{aligned}
        \]

        \item natural transformations and axioms, mimicking those of a strict $\omega$-category up to isomorphism.
      \end{itemize}
      When these natural transformations are all identities we say that a globular multicategory is \textbf{strict}.
      We denote the category of strict monoidal globular categories by $\monGlobCat$.
    \end{definition}

    Every monoidal globular category can be viewed as a globular multicategory (see \cite{Cruttwell2010A-unified-frame}). We will show that we always have homomorphism types in this case.

    \begin{prop}
      The globular multicategory induced by a monoidal globular category has homomorphism types.
    \end{prop}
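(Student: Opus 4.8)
The plan is to use the representability correspondence of \cite{Cruttwell2010A-unified-frame}: for a monoidal globular category $A$, an $n$-type of the induced globular multicategory is an object of the category $A(n)$, and a $\pi$-shaped $n$-term $f : \Gamma \to C$ is a morphism $\bigotimes \Gamma \to C$ in $A(n)$, where $\bigotimes \Gamma$ denotes the composite of the context $\Gamma$ obtained by applying the composition functors $\otimes_k$ and the unit functor $Z$ according to the pasting shape $\pi$. Under this dictionary, composition of terms and substitutions is computed by functoriality of $\bigotimes$ followed by composition in $A(n)$, and the multicategory laws follow from the coherence of $A$. I would first make this correspondence precise, since every subsequent verification becomes a statement about morphisms in the categories $A(n)$.

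Next I would equip the underlying globular multigraph with its reflexive structure. For an $n$-type $A$, I set $\IdType_A$ to be the identity $(n+1)$-cell on $A$ supplied by the unit functor $Z$; its source and target are both $A$, so $\IdType_A : A \barrightarrow A$ as required. The composite of the degenerate $(n+1)$-context on a single variable $A$ is exactly this unit cell, so the reflexivity term $r_A$ is taken to be the corresponding unit constraint isomorphism (the identity in the strict case). More generally, for a $k$-variable $x$ in a context $\Gamma$, the reflexivity substitution $r_x : \Gamma \to \Gamma \oplus \IdType_x$ replaces $\Gamma_x$ by its identity cell $Z\Gamma_x$ and is the identity elsewhere.

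The heart of the argument is the following lemma: the reflexivity substitution $r_x$ induces an isomorphism $\bigotimes(r_x) : \bigotimes \Gamma \to \bigotimes(\Gamma \oplus \IdType_x)$ in $A(n)$. Intuitively, $\Gamma \oplus \IdType_x$ is obtained from $\Gamma$ by inserting a unit cell at $x$, and by the unit laws of the monoidal globular category inserting a unit leaves the composite unchanged up to a coherent isomorphism; the map $\bigotimes(r_x)$ is precisely that isomorphism. I expect this to be the main obstacle, since proving it requires unwinding how $\bigotimes$ is assembled from $\otimes_k$ and $Z$ over the pasting shape $\pi$ and invoking Batanin's coherence axioms governing the interaction of the unit functor with the composition functors. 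Once this is in hand the rest is formal.

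Granting the lemma, I would define both families of $J$-terms by precomposition with the inverse:
\[
  J_x(f) := f \circ \bigotimes(r_x)^{-1}, \qquad J^{j_s, j_t}_x(f) := f \circ \bigotimes(r_x)^{-1}.
\]
The computation rule is then immediate, since $r_x ; J_x(f)$ corresponds to $J_x(f) \circ \bigotimes(r_x) = f$, and likewise in the second case. For the boundary conditions in the second case, the hypothesis $r_x ; j_s = sf$ says that $j_s \circ \bigotimes(r_x) = sf$ on source boundaries, so invertibility forces $j_s = s\big(f \circ \bigotimes(r_x)^{-1}\big) = s J^{j_s, j_t}_x(f)$, and symmetrically $t J^{j_s, j_t}_x(f) = j_t$, using that $s$ and $t$ commute with $\bigotimes$ and preserve inverses. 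Thus both families of $J$-terms exist (and are in fact uniquely determined by the computation rule together with invertibility), so $A$ induces a globular multicategory with homomorphism types.
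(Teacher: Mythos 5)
Your proposal is correct and follows essentially the same route as the paper's proof sketch: homomorphism types are given by the unit functor $Z$, reflexivity terms by the identity/unit constraint on $Z(M)$, and the $J$-terms are obtained by composing with the unit-law (coherence) isomorphism, which is exactly your invertible $\bigotimes(r_x)$. The only cosmetic difference is that you package the unit law as an explicit lemma and derive uniqueness of the boundary data, whereas the paper works in the strict case (justified by coherence) and writes the $J$-term directly as the composite $A \otimes_k Z(M) \to A \to B$.
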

    \begin{proof}[Proof sketch]
      We will work in the strict case.
      There is a coherence theorem for monoidal globular categories (see \cite{Batanin1998Monoidal-Globul}) and so there should be no real loss of generality.
      \begin{itemize}
        \item an  $n$-type is an object of $A(n)$
        \item $n$-term $f : \bigodot_{i} M_i \to N$
        is an arrow
        \[
          \begin{aligned}
            \bigotimes_{i} M_i \to N
          \end{aligned}
        \]
        in $A(n)$.
        Here $\bigotimes$ is repeated application of $\otimes$
        and we follow the convention that a nullary product is a unit.
      \end{itemize}
      This globular multicategory has homomorphism types.
      For any type $M$,
      we set
      \[
        \begin{aligned}
          \IdType_M = Z(M)
        \end{aligned}
      \].
      The $(n+1)$-context $M$ corresponds to the object $Z(M)$
      and so we define the reflexivity term $r_M : M \to \IdType_M$
      to be the identity arrow
      \[
        \begin{aligned}
          \id_{Z(M)} : Z(M) \to Z(M)
        \end{aligned}
      \]
      The J-terms can now be constructed using the coherence laws of $Z$.
      For instance,
      whenever $f : A \to B$ is an $n$-term
      and $x : M$ is a $k$-variable.
      We define
      \[
        \begin{aligned}
          J_x(f) : A \odot_k \IdType_x \to B
        \end{aligned}
      \]
      to be the composite
      \[
        \begin{aligned}
          A \otimes_k Z(M) \longrightarrow A \overset{f}\longrightarrow B
        \end{aligned}.
      \]
      where the arrow on the left is a unit law.
    \end{proof}

    \begin{definition}
      We say that a globular multicategory has \textbf{strict homomorphism types} when for any $n$-terms $f, f'$ and any variable $x$ such that
      \[
        \begin{aligned}
          r_x ; f = r_x ; f'
        \end{aligned},
      \]
      we have that
      \[
        \begin{aligned}
          f = f'
        \end{aligned}
      \]
    \end{definition}

    \begin{example}
      Every strict monoidal globular category induces a globular multicategory with strict homomorphism types.
    \end{example}

    Many more familiar examples can be seen as low-dimensional globular multicategories.

    \begin{definition}
      Let $X$ be a globular multicategory with identity types
      and let $n \geq 0$.
      We say that $X$ is $n$-\textbf{truncated},
      when for any $m$-term $f : \Gamma \to A$ with $m > n$,
      we have that:
      \begin{itemize}
        \item  There exists an $n$-context $\Gamma'$
        and variables $x_1, \ldots, x_l$
        such that
        \[
          \begin{aligned}
           \Gamma = \Gamma' \oplus \IdType_{x_1} \oplus \cdots \oplus \IdType_{x_l}
         \end{aligned}.
       \]

       \item There exists an $n$-type $A'$ such that
       \[
        \begin{aligned}
          A = \IdType^{n-m} A'
        \end{aligned}.
      \]

      \item There exists an $n$-term $f'  : \Gamma' \to A'$  such that
      \[
        \begin{aligned}
          f = J_{x_1}(\cdots (J_{x_l}(f')
        \end{aligned}.
      \]
    \end{itemize}
  \end{definition}

  \begin{example}
    The category of $1$-truncated strict monoidal globular categories is equivalent to the category of double categories.
  \end{example}

  \begin{example}
    The category of $1$-truncated globular multicategories with strict homomorphism types
    is equivalent to
    the category of \emph{virtual double categories} with horizontal units as described by Crutwell and Shulman \cite{Cruttwell2010A-unified-frame}.

    Ibid., the monoids and modules construction for virtual double categories is exhibited as the the right adjoint of the $2$-functor which forgets horizontal units.
    Many familiar collections of ``category-like'' objects  can be seen as   the result of this construction.
    Hence any such collection gives rise to a
    globular multicategory with homomorphism types.

    In future work we will define a \emph{higher modules construction}. This will allow us to study globular multicategories of ``higher category-like'' objects.
  \end{example}

  \begin{example}
    Riehl and Verity \cite{Riehl2017Kan-extensions-} describe
    a  framework for reasoning about ``$(\infty, 1)$-category'-like'' objects
    using structures called $\infty$-cosmoi.
    Every $\infty$-cosmos gives rise to a virtual double category of modules
    whose horizontal units are arrow $\infty$-categories.
    This construction,
    throws away much higher dimensional data
    but still provides a convenient framework for a great deal of formal category theory.
    We expect to be able to construct
    more general globular multicategories with homomorphism types from $\infty$-cosmoi
    which retain this higher dimensional data.
  \end{example}

  \begin{remark}
    Starting with a category with a well behaved notion of path object, van den Berg and Garner \cite{Berg2011Types-are-weak-} construct a globular multicategory with homomorphism types.
    In future work we will construct a globular multicategory with homomorphism types from a category with \emph{directed path objects}.
  \end{remark}

  \section{The Homotopy Theory of Globular Multicategories}\label{homotopy}

  By Remark \ref{globPlusDirect}. the category $\globPlus$ of generic types and terms is a direct category.
  This induces a weak factorization system on globular multicategories and related structures.

  \begin{definition}
    Let us denote by
    \[
      \begin{aligned}
        I = \{ \iota_u : \partial u \longrightarrow u \mid u \in \globPlus\}
      \end{aligned}.
    \]
    the set of \emph{boundary inclusions} of $\globPlus$.
    Then $I$ cofibrantly generates a weak factorization system $(\mathcal{L}, \mathcal{R})$ on $\globGraph$.
    We refer to maps in $\mathcal{L}$ as \textbf{cofibrations} and maps in $\mathcal{R}$ as \textbf{acyclic fibrations}.
    A map of globular multigraphs $f : X \to Y$ is an acyclic fibration
    when, for any generic type or term $u$,
    each commutative square
    \begin{equation*}
     \begin{tikzcd}
      \partial u \ar{r}\ar[swap]{d}{\iota_u} & X\ar{d}{f}\\
      u \ar{r}\ar[dotted]{ur} & Y\\
    \end{tikzcd}
  \end{equation*}
  has a filler.
  A map of globular multigraphs $i : Z \to W$ is a cofibration
  when for each acyclic fibration $f : X \to Y$,
  each commutative square
  \begin{equation*}
   \begin{tikzcd}
    \partial Z \ar{r}\ar[swap]{d}{i} & X\ar{d}{f}\\
    W \ar{r}\ar[dotted]{ur} & Y\\
  \end{tikzcd}
\end{equation*}
has a filler.
\end{definition}

\begin{prop}
  A map of globular multigraphs is a cofibration exactly when it is a monomorphism.
\end{prop}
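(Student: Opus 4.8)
The plan is to prove the two inclusions $\mathcal{L} \subseteq \mathrm{Mono}$ and $\mathrm{Mono} \subseteq \mathcal{L}$ separately, working throughout in the presheaf category on $\globPlus$ provided by the equivalence $\globGraph \simeq \widehat{\globPlus}$. First I would recall the structure of the cofibrantly generated left class: by definition a map is a cofibration exactly when it has the left lifting property against every acyclic fibration, and by the small object argument this class $\mathcal{L}$ is the retract closure of the \emph{$I$-cellular} maps, that is, the (here countable) transfinite composites of pushouts of coproducts of boundary inclusions $\iota_u$. Both inclusions will be read off from this description together with the fact, from Remark \ref{globPlusDirect}, that $\globPlus$ is a direct category.

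For the easy inclusion $\mathcal{L}\subseteq\mathrm{Mono}$, each boundary inclusion $\iota_u : \partial u \to u$ is a monomorphism. Since $\globGraph$ is a presheaf topos, monomorphisms are stable under coproduct, pushout, transfinite composition, and retract. As these are precisely the operations generating $\mathcal{L}$ from $I$, every cofibration is a monomorphism.

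The substantive inclusion is $\mathrm{Mono}\subseteq\mathcal{L}$, and its engine is the skeletal filtration of a monomorphism, which is available because $\dim$ strictly raises dimension on nonidentities, so there are no degeneracies and the only dimension-preserving maps are identities. Given a monomorphism $i : Z \hookrightarrow W$, I would define the relative skeleton $\mathrm{sk}_n(i) \subseteq W$ to be the subpresheaf generated by the image of $Z$ together with all cells of $W$ of dimension at most $n$, so that $\mathrm{sk}_{-1}(i) = Z$ and $W = \colimit_n \mathrm{sk}_n(i)$ since every cell lives in some finite dimension. The key step is to show that for each $n$ the inclusion $\mathrm{sk}_{n-1}(i) \hookrightarrow \mathrm{sk}_n(i)$ fits into a pushout square
\begin{equation*}
  \begin{tikzcd}
    \coprod_{u,\, x} \partial u \ar{r} \ar[swap]{d}{\coprod \iota_u} & \mathrm{sk}_{n-1}(i) \ar{d}\\
    \coprod_{u,\, x} u \ar{r} & \mathrm{sk}_n(i)
  \end{tikzcd}
\end{equation*}
where $u$ ranges over the objects of $\globPlus$ with $\dim u = n$ and $x$ ranges over the cells in $W(u) \setminus Z(u)$. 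The description $w \in \partial u(v) \iff \dim v < \dim u$ guarantees that the attaching maps factor through $\mathrm{sk}_{n-1}(i)$, and directness guarantees that the new top cells carry no relations beyond their boundaries. Assembling these squares exhibits $i$ as a countable composite of pushouts of coproducts of maps in $I$, hence $I$-cellular, hence $i \in \mathcal{L}$.

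I expect the pushout step to be the main obstacle: one must verify carefully that the square above really is a pushout, which amounts to checking, in each arity $v$, that $\mathrm{sk}_n(i)(v)$ is obtained from $\mathrm{sk}_{n-1}(i)(v)$ by freely adjoining the new $n$-cells and their formal faces with no extra identifications. This is exactly where directness of $\globPlus$ does the work, ruling out the degeneracies and dimension-collapsing maps that would otherwise force identifications among the attached cells, and where the explicit formula for $\partial u$ from Remark \ref{globPlusDirect} is needed to pin down the attaching maps. Everything else, namely the closure properties of monomorphisms in a presheaf topos and the identification of $\mathcal{L}$ with the retract closure of the cellular maps, is formal.
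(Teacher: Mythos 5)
Your proof is correct, but it takes a genuinely different route from the paper: the paper's entire proof is to observe that $\globPlus$, being a direct category, is skeletal and has no non-trivial automorphisms, and then to cite a general result of Cisinski (Proposition 8.1.37 of the cited reference) asserting that over such index categories the monomorphisms of the presheaf category are exactly the maps cofibrantly generated by the boundary inclusions. What you have written is, in effect, a self-contained proof of that cited proposition specialized to $\globPlus$. Both of your inclusions are sound: cofibrations are monomorphisms because, by the small object argument, $\mathcal{L}$ is the retract closure of the $I$-cellular maps and monomorphisms in a presheaf topos are stable under coproduct, pushout, transfinite composition and retract; and monomorphisms are cofibrations via the relative skeletal filtration, whose pushout step does hold levelwise precisely because directness forbids any non-identity map of $\globPlus$ that lowers or preserves dimension, so that $\mathrm{sk}_{n-1}(i)(u) = Z(u)$ whenever $\dim u = n$ and the new $n$-cells are attached freely along boundaries lying in lower dimensions. (Two harmless imprecisions: the attaching cells should range over $W(u)$ minus the \emph{image} of $Z(u)$, i.e.\ one should replace $i$ by its image before filtering; and the parenthetical claim that cellular maps are countable composites is not needed for either inclusion, though it is in fact justified a posteriori by your filtration.) The trade-off between the two proofs: the paper's is two lines but leans on external machinery designed for the harder situation where the index category has degeneracies (as for simplicial sets, where an Eilenberg--Zilber-type argument is required); yours is longer but elementary, and makes explicit exactly where directness enters, namely in eliminating all degenerate-cell bookkeeping from the skeletal induction.
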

\begin{proof}
  Since $\globPlus$ is a direct category, it is skeletal and has no non-trivial automorphisms.
  The result now follows from \cite[Proposition 8.1.37]{MR2294028}.
\end{proof}

Our weak factorization system can be transferred to other categories of interest using the adjunctions induced by various forgetful functors
We have the following commutative diagram of functors:
\begin{equation*}
  \begin{tikzcd}[column sep = small]
    &&&
    \globMult \ar{rd}
    &
    \\
    \monGlobCat \ar{r}
    &
    \globMultStrId \ar{r}
    &
    \globMultId \ar{rd} \ar{ru}
    &
    &
    \globGraph
    \\
    &&&
    \rGlobGraph \ar{ru}
    &
  \end{tikzcd}
\end{equation*}
Each of these forgets essentially algebraic data and so has a left adjoint.
Let $U : \mathcal{C} \to \mathcal{D}$ be one of these forgetful functors and let $
F : \mathcal{D} \to \mathcal{C}$ be its left adjoint.
Then the weak factorization system of \textbf{cofibrations} and \textbf{acyclic fibrations} in $\mathcal{C}$ is generated by
\[
  \begin{aligned}
    F \iota_u : F \partial u \longrightarrow F u
  \end{aligned}
\]
for each generating cofibration in $\iota_u$ in $\mathcal{C}$.
A morphism $f : X \to Y$ in $\mathcal{C}$ is an acyclic fibration exactly when $Uf$ is an acyclic fibration in $\mathcal{D}$.
Furthermore $\mathcal{F}$ preserves cofibrations.

\begin{example}\label{operadFibrations}
Suppose that $X$ and $Y$ are globular operads.
Then every homomorphism of globular operads is bijective on types and so the lifting conditions for generic types are always satisfied.
It follows that a homomorphism $f : X \to Y$ is an acyclic fibration
if and only if it satisfies the lifting conditions for generic terms.

We say that a globular operad is \emph{normalized} when it has a unique $0$-term. (This term must be the identity term on the unique $0$-type).
Suppose  that the globular operad $X$ is normalized
and let $\top$ be the terminal globular operad.
Then the canonical map
\begin{equation*}
  \begin{tikzcd}
    X \ar[two heads]{r}{!} & \top
  \end{tikzcd}
\end{equation*}
is an acyclic fibration exactly when $P$ is a normalized contractible globular operad.
This follows from the observations of Garner \cite{Garner2009A-homotopy-theo}.
The algebras of $P$ are weak $\omega$-categories in the sense of Leinster \cite{LeinsterHigher-Operads-}.
\end{example}

The terms defining the natural transformations in this example motivate the following definition:
\begin{definition}
  Suppose that $f,f' : \Gamma \to A$ are parallel $n$-terms in a globular multicategory with identity types.
  Then a \textbf{transformation}
  \[
    \begin{aligned}
      \phi : f \longrightarrow f'
    \end{aligned}
  \]
  is a term $\phi : \Gamma \to \IdType_A$ with $\phi : f \hto f'$.
  Given a term $g : f' \hto tg$,
  the composite
  \[
    \begin{aligned}
      \phi \circ g : f \longhto tg
    \end{aligned}
  \]
  is defined to be $(\phi \odot_n g) ; m$
  where $m = J_{tA}(\id_A)$.
  Given $h : sh \hto f$, we define
  \[
    \begin{aligned}
     h \circ \phi : sh \longhto f'
   \end{aligned}
 \]
 similarly.
 We have that $s (h \circ \phi) = sh$ and $t (h \circ \phi) = f'$.
 The transformation $f ; r_A : f \to f$ is the identity at $f$ with respect to $-\circ-$.

 A transformation between substitutions is a pasting diagram of transformations between terms.
\end{definition}

This definition immediately implies the following lemma. One interpretation of this lemma is  that composition with reflexivity terms defines a sort of weak equivalence.

\begin{lemma}\label{reflexEquiv}
Suppose that $f, f' : \Gamma \oplus \IdType_x \to A$ are terms in a globular multicategory
with identity types.
Then whenever
\[
  \begin{aligned}
   r_x ; f = r ; f'
 \end{aligned}
\]
we have
\[
  \begin{aligned}
   J_x^{f, f'}(r_x ; f ; r_A) : f \longrightarrow f'
 \end{aligned}
\]
When we have strict homomorphism types,
we have that $f = f'$
and $J_x^{f, f'}(r_x ; f ; r_A) = f ; r_A$.
\end{lemma}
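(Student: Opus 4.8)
The plan is to read off the transformation directly from the higher J-term rule, applied to the single term $g := r_x ; f ; r_A$ with boundary data $j_s = f$ and $j_t = f'$. The conclusion that $J_x^{f, f'}(g)$ is a term into $\IdType_A$ with source $f$ and target $f'$ is then immediate from the rule, so the real content is to verify the side conditions that license the application, which is where the hypothesis $r_x ; f = r_x ; f'$ must enter.

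First I would compute the source and target of $g = r_x ; f ; r_A$. Writing $h := r_x ; f : \Gamma \to A$, the term $g = h ; r_A$ is the reflexivity term $r_A$ with $h$ substituted for its sole variable, so both of its endpoints are $h$; concretely, using that $s$ and $t$ commute with $;$ and that the reflexivity term restricts to the identity on its boundary, $s r_A = t r_A = \id_A$, one gets
\[ s g = h ; s r_A = h = h ; t r_A = t g = r_x ; f. \]
Thus $g$ is a term into $\IdType_A$ (recall $s\IdType_A = t\IdType_A = A$) whose source and target both equal $r_x ; f$, where the $n$-context $\Gamma$ is viewed through its reflexive extension as the domain of the $(n+1)$-dimensional term $g$.

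Next I would check that $j_s = f$ and $j_t = f'$ satisfy the hypotheses of the higher J-term rule for $g$. Since $s\IdType_A = t\IdType_A = A$, the terms $f, f' : \Gamma \oplus \IdType_x \to A$ have precisely the type required of $j_s$ and $j_t$, and the two required equations $r_x ; j_s = s g$ and $r_x ; j_t = t g$ reduce, by the previous computation, to $r_x ; f = r_x ; f$ and $r_x ; f' = r_x ; f$; the first is trivial and the second is exactly the hypothesis of the lemma. The rule then yields $J_x^{f, f'}(g) : \Gamma \oplus \IdType_x \to \IdType_A$ with $s J_x^{f, f'}(g) = f$ and $t J_x^{f, f'}(g) = f'$, which is by definition a transformation $f \to f'$.

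For the strict case, $f = f'$ follows at once by applying the defining property of strict homomorphism types to the hypothesis $r_x ; f = r_x ; f'$. To identify $J_x^{f, f'}(g)$ with $f ; r_A$ I would use strictness a second time: it suffices to show the two terms agree after precomposition with $r_x$, and indeed $r_x ; J_x^{f, f'}(g) = g = r_x ; f ; r_A$ by the J-term computation rule, while $r_x ; (f ; r_A) = (r_x ; f) ; r_A = g$ by associativity, so strictness forces $J_x^{f, f'}(g) = f ; r_A$. The step I expect to demand the most care is the boundary computation $s g = t g = r_x ; f$: it depends on the functoriality of $s$ and $t$ with respect to $;$ and on the reflexivity term having identity faces, and it requires keeping the implicit reflexive extension of $\Gamma$ straight so that the dimensions of $g$, $f$, and $f'$ line up. Once this is settled, the remainder is a direct appeal to the J-term rules, the definition of a transformation, and cancellation against $r_x$.
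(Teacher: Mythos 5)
Your proposal is correct and matches the paper's intended argument: the paper states this lemma as an immediate consequence of the definitions, and your write-up is exactly that direct application of the boundary-specified J-term rule to $r_x ; f ; r_A$ (with $j_s = f$, $j_t = f'$), plus the two uses of strictness for the final claim. The only content beyond the paper is your careful verification that $s(r_x;f;r_A) = t(r_x;f;r_A) = r_x;f$, which is precisely the implicit computation the paper relies on.
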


Transformations can be used to provide a useful alternative description of acyclic fibrations.
Intuitively this description says that term-lifting properties of acyclic fibrations are satisfied exactly when, on terms, a homomorphism is``strictly surjective and weakly reflects identities''.

\begin{definition}
  Let $f : X \to Y$ be a homomorphism of globular multicategories with identity types.
  We say that $f$ \textbf{weakly reflects identities} of terms
  if whenever $v, v' : \Gamma \to A$ are parallel terms in $X$ such that $f(v) = f(v')$,
  we have a transformation $\phi : v \to v'$ such that
  $f(\phi) = f(v) ; r_A$.
  In this case we say that $\phi$ is an \textbf{identification}.
  We say that $f$ \textbf{strictly reflects identities} when all the corresponding identifications can be chosen to be identity transformations.
\end{definition}

\begin{prop}\label{acyclicFibrationsAlternative}
A homomorphism of globular multicategories with homomorphism types $f : X \to Y$ is an acyclic fibration if and only if all the following conditions hold:
\begin{enumerate}[label=(\roman*)]
  \item The homomorphism $f$ has the right lifting property against the boundary-inclusions of types. \label{typeLifting}
  \item The homomorphism $f$ is surjective on terms. \label{surjective}
  \item The homomorphism $f$ weakly reflects identities of terms. \label{faithful}
\end{enumerate}
\end{prop}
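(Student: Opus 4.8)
The plan is to analyze the generating boundary inclusions $\iota_u$ according to whether $u$ is a generic type or a generic term, exploiting that $\globPlus$ is a direct category so that the lifting conditions can be organized by induction on $\dim u$. For a generic type $u = n$ the lifting condition against $\iota_u$ is literally condition \ref{typeLifting}, so the whole content lies in relating lifting against the generic \emph{term} boundary inclusions to \ref{surjective} and \ref{faithful}. First I would unwind what such a square means: a map $u \to Y$ is an $n$-term $g : \Delta \to A'$ of $Y$, while a map $\partial u \to X$ specifies the strictly lower-dimensional part of a prospective lift, namely a domain context $\Gamma$ and codomain type $A$ with $f\Gamma = \Delta$ and $fA = A'$, together with (when $n \geq 1$) source and target terms $\alpha, \beta$ in $X$ lifting $sg, tg$. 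A filler is then an $n$-term $\tilde f : \Gamma \to A$ with $s\tilde f = \alpha$, $t\tilde f = \beta$ and $f\tilde f = g$.

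For the forward direction, suppose $f$ is an acyclic fibration. Then \ref{typeLifting} is immediate since the type boundary inclusions are among the $\iota_u$. For \ref{surjective}, given $g$ in $Y$ I would build a lift of $\partial u$ into $X$ by induction on dimension, using type lifting together with lower-dimensional term lifting, and then apply term lifting once more to obtain a term of $X$ over $g$. For \ref{faithful}, given parallel terms $v, v' : \Gamma \to A$ of $X$ with $fv = fv'$, I would set up the lifting square for the generic $(n+1)$-term whose codomain is a homomorphism type, with bottom map the transformation $fv ; r_A : fv \to fv$ in $Y$ and boundary lift sending the source to $v$ and the target to $v'$. This boundary lift is legitimate precisely because $fv = fv'$, so both $s$ and $t$ of the bottom map equal $fv$. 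The resulting filler is a transformation $\phi : v \to v'$ with $f\phi = fv ; r_A$, which is exactly the required identification.

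For the reverse direction, assume \ref{typeLifting}--\ref{faithful} and consider a term boundary square as above. By \ref{surjective} I obtain some $w$ in $X$ with $fw = g$, and using the inductive lifting of the lower-dimensional data I may assume its domain and codomain agree with the prescribed $\Gamma, A$. Its source and target $sw, tw$ are then parallel to $\alpha, \beta$ and satisfy $f(sw) = sg = f(\alpha)$ and $f(tw) = tg = f(\beta)$. Applying \ref{faithful} yields identifications $\psi_s : sw \to \alpha$ and $\psi_t : tw \to \beta$ with $f(\psi_s) = sg ; r_{sA}$ and $f(\psi_t) = tg ; r_{tA}$. I would then transport $w$ along $\psi_s$ and $\psi_t$, using the composition of transformations with terms together with the boundary-prescribing J-terms $J^{j_s, j_t}_x$, to manufacture a term $\tilde f : \Gamma \to A$ with $s\tilde f = \alpha$ and $t\tilde f = \beta$. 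Because $f$ carries each identification to a reflexivity term, applying $f$ collapses the transport and leaves $f\tilde f = g$, so $\tilde f$ is the desired filler.

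The main obstacle is exactly this final transport step in the reverse direction: assembling the data $j_s, j_t$ from the identifications $\psi_s, \psi_t$, verifying the compatibility hypotheses such as $r_x ; j_s = sf$ demanded by the J-term axioms, and checking that the resulting $\tilde f$ carries the prescribed source and target on the nose while still mapping to $g$ under $f$. Lemma \ref{reflexEquiv} and the strict identity $f(\psi_s) = sg ; r_{sA}$ are what force the image computation to collapse back to $g$; the delicate part is arranging the bookkeeping so that the source, target, and image conditions all hold simultaneously.
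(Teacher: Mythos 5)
Your overall architecture is the same as the paper's: in the forward direction, surjectivity comes from lifting against the cofibration $\emptyset \to u$ (the paper does this directly rather than by your dimension induction, but both work), and weak reflection of identities comes from exactly the square the paper uses, with boundary $\boundaryMap{v}{v'} : \partial I_u \to X$ over the bottom map $f(v);r_A$. The reverse direction is also the paper's lift-then-correct strategy. However, your reverse direction contains a genuine gap, and it stems from a misreading of $\partial u$. For a generic $\pi$-shaped $n$-term $u$, the top-dimensional types of the domain context and the codomain type have dimension exactly $n$, so they do \emph{not} belong to $\partial u$, which by definition contains only the cells of dimension strictly less than $n$; a map $\partial u \to X$ prescribes only the source and target terms $\widetilde{sv}, \widetilde{tv}$ and the types of dimension $<n$. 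Given that misreading, your step ``using the inductive lifting of the lower-dimensional data I may assume its domain and codomain agree with the prescribed $\Gamma, A$'' is doing real work, and it cannot be justified: condition (ii) is bare surjectivity and gives no control over which lift $w$ of $g$ you obtain, and a globular multicategory with homomorphism types has no mechanism for moving a term between two distinct types with the same image --- transformations and $J$-terms can adjust source and target \emph{terms}, never the underlying types. Nothing in (i)--(iii) lets you re-choose the domain context or codomain of $w$.

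In the paper this particular problem does not arise, because the top-dimensional types are unprescribed: the filler is simply built on $w$'s own domain context and codomain. Moreover, the ``transport'' that you leave as the main obstacle is carried out there in one line, with no $J^{j_s,j_t}_x$ bookkeeping: taking the identifications $\phi : \widetilde{sv} \to sw$ and $\psi : tw \to \widetilde{tv}$ supplied by (iii), the filler is $\tilde v = \phi \circ w \circ \psi$, where $-\circ-$ is the whiskering composition already defined via $m = J_{tA}(\id_A)$. Its source and target are $\widetilde{sv}$ and $\widetilde{tv}$ on the nose by the formulas $s(h \circ \phi) = sh$ and $t(\phi \circ g) = tg$, and $f(\tilde v) = v$ because homomorphisms preserve $-\circ-$ and the images $f(\phi), f(\psi)$ are identity transformations, which are units for $-\circ-$. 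One caveat worth knowing: the lower-dimensional version of your worry is real even in the paper's own proof, which applies (iii) to $\widetilde{sv}$ and $sw$ although these are only known to have the same image under $f$, not to be parallel (to have the same domain context, codomain, source and target). So your instinct that the boundary data needs matching is sound; the point is that it cannot be discharged by ``I may assume'', and the paper's mechanism for what can be corrected is the $-\circ-$ composition rather than prescribed-boundary $J$-terms.
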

\begin{proof}
  First suppose that $f$ is an acyclic fibration. Then \ref{typeLifting} follows trivially.
  For each generic type or term $u$,
  the unique map $\emptyset \to u$ is a cofibration.
  The lifting property of $f$ with respect to this map tells us that $f$ is surjective on types or terms with the same shape as $u$.
  This proves \ref{surjective}.

  Now suppose that $v, v' : \Gamma \to A$ are parallel terms in $X$ and that $f(v) = f(v')$.
  Let $u$ be the generic term with the same shape as $v$ and $v'$.
  Let $I_u$ be the generic term with same shape as transformations between $v$ and $v'$.
  Then$v$ and  $v'$ together correspond to a map $\boundaryMap{v}{v'} : \partial I_u \to X$ of globular multigraphs.
  Furthermore, we have the following commutative square in $\globGraph$:
  \begin{equation*}
    \begin{tikzcd}
      \partial I_u
      \ar[r, "\boundaryMap{v}{v'}"]
      \ar[d, "\partial I_u" left]
      &
      X
      \ar[d, "f"]
      \\
      I_u \ar[ur, dotted]
      \ar[r, "f(v);r_A" below]
      &
      Y
    \end{tikzcd}
  \end{equation*}
  Since $f$ is an acyclic fibration,
  this square has a filler.
  This filler defines the transformation $v \to v'$ required by \ref{faithful}.

  Now suppose on the other hand that we have \ref{typeLifting}, \ref{surjective} and \ref{faithful}.
  Let $u$ be a generic  term and fix a commutative square:
  \begin{equation*}
   \begin{tikzcd}
    \partial u \ar{r}{\widetilde{\partial v}}
    \ar[swap]{d}{\iota_u} & X\ar{d}{f}\\
    u \ar[swap]{r}{v} & Y\\
  \end{tikzcd}
\end{equation*}
Let the source and target of $\widetilde {\partial v}$ be $\widetilde{sv}$ and $\widetilde{tv}$ respectively.
By \ref{surjective},
there is a term $w$ in $X$ with $f(w) = v$.
It follows that
$f(sw) = sv = f(\widetilde{sv})$
and $f(tw) = tv = f(\widetilde{tv})$.
Hence, by \ref{faithful} there are transformations
$\phi : \widetilde{sv} \to sw$ and $\psi : tw \to \widetilde{tv}$
such that $f(\phi) = sw ;r$  and $f(\psi) = tw ; r$.
We define
\[
 \begin{aligned}
   \tilde v = \phi \circ w \circ \psi
 \end{aligned}
\]
By construction $\partial \tilde v = \widetilde{\partial v}$.
Furthermore, every homomorphism of globular multicategories with homomorphism types preserves $- \circ -$ and so $f(\tilde v) = f(w) = v$.
Hence $\tilde v$ defines the required filler and so $f$ is an acyclic fibration.
\end{proof}

Our next result describes conditions under which, given a category $\mathcal{C}$ and \emph{strictification} functor $S$ which respects homotopy theoretic information in $\mathcal{C}$, there is an induced acyclic fibration of endomorphism operads.

\begin{theorem}\label{endAcyclic}
Let $\mathcal{C}$ be a category with a factorization system $(\mathcal{L}, \mathcal{R})$.
Let $X : \glob^\op \to \mathcal{C^\op}$ be a globular object in $\mathcal{C^\op}$
and let $S : C \to D$ be a pullback-preserving functor.
Suppose that there exists a functor $U : \mathcal{D} \to \mathcal{C}$
together with a natural transformation $\phi : \id \Rightarrow US$.
Suppose that all the following conditions hold:
\begin{itemize}
  \item The functor $U$ is faithful.
  \item Each boundary inclusion $X(\iota_n) : X(\partial n) \to X(n)$
  is an $\mathcal{L}$-map.
  \item For each globular pasting diagram $\pi$, the arrow $\phi_{X(\pi)}$ is an $\mathcal{R}$-map and an epimorphism.
\end{itemize}
Then the induced homomorphism of globular operads
\[
 \begin{aligned}
   \End(S^\op) : \End(X) \longrightarrow \End(S^\op X)
 \end{aligned}
\]
is an acyclic fibration of globular multicategories.
\end{theorem}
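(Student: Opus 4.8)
The plan is to reduce the statement to a single lifting problem inside the factorization system $(\mathcal{L}, \mathcal{R})$ on $\mathcal{C}$ and to solve it using naturality of $\phi$, the epimorphism hypothesis, and faithfulness of $U$.

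First I would invoke Example \ref{operadFibrations}. Since $\End(X)$ and $\End(S^{\op}X)$ are globular operads, the homomorphism $\End(S^{\op})$ is bijective on types, the type-lifting conditions hold automatically, and it suffices to verify the lifting property against the boundary inclusion $\iota_u$ of each generic $\pi$-shaped $n$-term $u$. To make these concrete I would unwind the endomorphism-operad construction: working in $\mathcal{C}^{\op}$, a $\pi$-shaped $n$-term of $\End(X)$ is a morphism $g : X(n) \to X(\pi)$ in $\mathcal{C}$ (writing $X(\pi)$ for $\hat X(\pi)$) together with compatible boundary data. Here I would first record that, because $X(\pi)$ is assembled from the $X(k)$ by the iterated pullbacks defining the span/pasting-diagram limits and $S$ preserves pullbacks, the pasting object of $S^{\op}X$ at $\pi$ is $SX(\pi)$; hence $\End(S^{\op})$ acts on terms simply by $g \mapsto Sg$.

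Next I would translate a lifting problem for $u$ into a square in $\mathcal{C}$. Such a problem consists of a $\pi$-shaped $n$-term $\bar g : SX(n) \to SX(\pi)$ of $\End(S^{\op}X)$ together with a lift of its boundary to $\End(X)$; the latter assembles into a single morphism $\partial g : X(\partial n) \to X(\pi)$. I would then consider the square
\begin{equation*}
\begin{tikzcd}
X(\partial n) \ar[r, "\partial g"] \ar[d, "X(\iota_n)"'] & X(\pi) \ar[d, "\phi_{X(\pi)}"]\\
X(n) \ar[r, "U\bar g \circ \phi_{X(n)}"'] & U S X(\pi)
\end{tikzcd}
\end{equation*}
Its left edge is an $\mathcal{L}$-map by the second hypothesis and its right edge is an $\mathcal{R}$-map by the third, so a diagonal filler $g : X(n) \to X(\pi)$ exists once I check that the square commutes. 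Commutativity follows from two applications of naturality of $\phi$ (at $X(\iota_n)$ and at $\partial g$), after which the required identity reduces to $S\,\partial g = \bar g \circ SX(\iota_n)$; but this is exactly the commutativity of the original lifting square restricted to $\partial u$, i.e. the statement that the boundary of $\bar g$ is the image under $\End(S^{\op})$ of the chosen boundary lift.

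Finally I would check that the filler is the term we want. The upper triangle $g \circ X(\iota_n) = \partial g$ says precisely that $g$ has the prescribed source and target, so $g$ is a genuine $\pi$-shaped $n$-term of $\End(X)$ with the correct boundary. For $\End(S^{\op})(g) = Sg = \bar g$ I would use the lower triangle $\phi_{X(\pi)} \circ g = U\bar g \circ \phi_{X(n)}$ together with naturality of $\phi$ at $g$ to obtain $U(Sg) \circ \phi_{X(n)} = U\bar g \circ \phi_{X(n)}$; since the $n$-globe is itself a pasting diagram, $\phi_{X(n)}$ is an epimorphism by the third hypothesis, so $U(Sg) = U\bar g$, and faithfulness of $U$ (the first hypothesis) forces $Sg = \bar g$. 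Thus $g$ solves the lifting problem and $\End(S^{\op})$ is an acyclic fibration. The bulk of the work, and the step I expect to be most delicate, is the unwinding in the first two paragraphs: getting the variance conventions right so that the boundary inclusions $X(\iota_n)$ appear on the \emph{left} of the square, identifying the lifted boundary data with a single morphism $\partial g$, and confirming that $S$ commutes with the pasting-diagram limits. Once the square is correctly assembled the genuinely load-bearing idea is small and clean: $\phi$ supplies the right-hand $\mathcal{R}$-map, while the epimorphism and faithfulness hypotheses are exactly what upgrade the weak agreement of $Sg$ and $\bar g$ (after postcomposition with $\phi$) into honest equality.
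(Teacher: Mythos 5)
Your proposal is correct and follows essentially the same route as the paper's own proof: the reduction to generic-term lifting via Example \ref{operadFibrations}, the identical lifting square in $\mathcal{C}$ with $X(\iota_n)$ as the $\mathcal{L}$-map on the left and $\phi_{X(\pi)}$ as the $\mathcal{R}$-map on the right, commutativity established by naturality of $\phi$ plus commutativity of the original square, and the final identification $S g = \bar g$ via the epimorphism $\phi_{X(n)}$ and faithfulness of $U$. Your explicit remarks that pullback-preservation of $S$ is what identifies the pasting object of $S^{\op}X$ at $\pi$ with $SX(\pi)$, and that $\phi_{X(n)}$ is epic because the $n$-globe is itself a pasting diagram, are details the paper leaves implicit.
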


\begin{proof}
  Following Example \ref{operadFibrations} it suffices to check the lifting conditions for generic terms.

  Let $u$ be the generic $\pi$-shaped $n$-term
  and suppose that we have a commutative square of the following form:
  \begin{equation*}
    \begin{tikzcd}
      \partial u \ar{r}{\widetilde {\partial v}} \ar[swap]{d}{\iota_u}
      &
      \End(X)  \ar{d}{\End(S)}
      \\
      u \ar[swap]{r}{v} & \End(S^\op X)
    \end{tikzcd}
  \end{equation*}

  By the Yoneda Lemma,
  the homomorphism $\widetilde \partial v$ corresponds to a term boundary
  $\widetilde{\partial f}$ in $\End(X)$

  and the homomorphism $v$
  corresponds to a term  $f$ in $\End(S^\op X)$.
  Commutativity of the square tells us that
  \[
    \begin{aligned}
      S(\widetilde{\partial f}) = \partial f
    \end{aligned}.
  \]
  Expanding the definition of $\End$,
  we find that
  $\widetilde {\partial f}$ corresponds to
  a  boundary-preserving arrow $\widetilde{\partial f} : X(\partial n) \to X(\pi)$ in $\mathcal{C}$.
  (Here  $X(\pi)$ comes from the Yoneda extension of  $X$.)
  Similarly $f$ corresponds to
  a boundary-preserving arrow $f : SX(n) \to SX(\pi)$ in $\mathcal{D}$.
  Hence we have the following commutative diagram in $\mathcal{C}$:
  \begin{equation*}
    \begin{tikzcd}
      &
      X(\partial n)
      \ar{r}{\widetilde{\partial f}}
      \ar[swap]{d}{\phi_{X(\partial n)}}
      &
      X(\pi)
      \ar{d}{\phi_{X(\pi)}}
      \\
      X(\partial n)
      \ar{r}{\phi_{X(\partial n)}}
      \ar[swap]{d}{X(\iota_n)}
      \ar[ur, equal]
      &
      USX(\partial n)
      \ar[r, bend left = 20, "{US(\widetilde{\partial(f)})}"{name = top,  above}]
      \ar[r, bend right = 20, "{U \partial f}"{below,  name=bottom}]
      \ar[swap]{d}{USX(\iota_n)}
      \
      \ar[equal, from = top, to = bottom, shorten <= 1ex, shorten >= 1ex]
      &
      USX(\pi)
      \\
      X(n)
      \ar[swap]{r}{\phi_{X(n})}
      &
      USX(n)
      \ar[ur, bend right = 30, "U f" swap]
    \end{tikzcd}
  \end{equation*}
  The squares commute by naturality
  and the bottom triangle commutes by definition of $\partial f$.
  Now, since  $X(\iota_n)$ is an $\mathcal{L}$-map
  and $\phi_{\pi}$ is an $\mathcal{R}$-map,
  the outer rectangle has a filler
  \begin{equation*}\tag{$\dagger$}\label{internalFiller}
  \begin{tikzcd}
    X(\partial n)
    \ar{r}{\widetilde {\partial f}}
    \ar[swap]{d}{X(\iota_n)}
    &
    X(\pi)
    \ar{d}{\phi_{X(\pi)}}
    \\
    X(n)
    \ar[swap]{r}{Uf  \circ \phi_{X(n)}}
    \ar[ur, dashed, "{\tilde f}" description]
    &
    USX(\pi)
  \end{tikzcd}
\end{equation*}
We know that $\tilde f$ is boundary preserving because its boundary $\widetilde{\partial f}$ is boundary-preserving.
Hence, using the definition of $\End$, we can view $\tilde f$ as a $\pi$-shaped $n$-term in $\End(X)$.
By the Yoneda Lemma $\tilde f$ corresponds to a map of globular multigraphs
\[
  \begin{aligned}
    \tilde v : u \longrightarrow \End(X)
  \end{aligned}.
\]
Hence it remains to show that the following diagram commutes:
\begin{equation*}\tag{$\star$}\label{externalFiller}
\begin{tikzcd}
  \partial u \ar{r}{\widetilde {\partial v}} \ar[swap]{d}{\iota_u}
  &
  \End(X)  \ar{d}{\End(S^\op)}
  \\
  u \ar[swap]{r}{v} \ar[ur, dashed, "\tilde v" description]& \End(S^\op X)
\end{tikzcd}
\end{equation*}
Commutativity of the top triangle of (\ref{externalFiller}) says that
\[
  \begin{aligned}
    \partial \tilde f = \widetilde{\partial f}
  \end{aligned}.
\]
This follows from commutativity of the top triangle of (\ref{internalFiller}).
Commutativity of the bottom triangle of (\ref{externalFiller})
says that
\[
  \begin{aligned}
    S(\tilde f) = f
  \end{aligned}.
\]
By naturality of $\phi$
and commutativity of the bottom triangle of (\ref{internalFiller}),
we have that
\[
  \begin{aligned}
   US(\tilde f) \circ \phi_n=  \phi_{\pi} \circ \tilde f= Uf \circ \phi_{n}
 \end{aligned}.
\]
Since $\phi_n$ is an epimorphism and $U$ is faithful, the result now follows.
\end{proof}

\section{Weak Higher Categorical Structure}\label{sectionWeakHigherCategories}

In this section we show that the types and terms of globular multicategories with homomorphism types have higher categorical structure.
We will demonstrate two related results. Firstly each piece of data \emph{internal} to a globular multicategory with homomorphism types has higher categorical structure. For example, each $0$-type has the structure of a weak $\omega$-category and each $0$-term has the structure of a weak $\omega$-functor.
We will then combine this internal data and show that the \emph{external} collection of types and terms has the structure of a weak $\omega$-category.
Most of the work involved in moving from the internal to the external situation is done by Theorem \ref{endAcyclic}.

For the remainder of this section we will label the adjunctions between reflexive globular multigraphs, globular multicategories with homomorphism types and globular multicategories with strict homomorphism types as in the following diagram.

\begin{equation*}
  \begin{tikzcd}
    \monGlobCat
    \ar[r, bend right,  "V" below]
    \ar[r, phantom, "\perp" description]
    &
    \ar[bend right, l, "\FreeCatStr" above]
    \globMultStrId \ar[r, bend right,  "U" below]
    \ar[r, phantom, "\perp" description]
    &
    \globMultId
    \ar[bend right, l, "\strictify" above]
    \ar[bend right, r]
    \ar[r, phantom, "\perp" description]
    &
    \rGlobGraph \ar[bend right, l, "\FreeH" above]
  \end{tikzcd}
\end{equation*}
The leftmost adjunction adds \emph{composition of types}.
We denote the unit of this adjunction by $\beta : \id \Rightarrow V \FreeCatStr$.
The middle adjunction can be thought of as describing \emph{strictification} of globular multicategories with homomorphism types.
We denote the unit of this adjunction by  $\etaStr : \id \Rightarrow U \strictify$.
We denote the unit of the composite adjunction $\FreeCatStr S \dashv UV$ by $\theta : \id \Rightarrow UV \FreeCatStr S$.
We write $\FreeCat = \FreeCatStr S \FreeH$.
Our ``internal'' result can now be stated precisely.

\begin{theorem}\label{internalStrucutre}
Let $X$ be a reflexive globular multigraph.
Then the unit of the strictification adjunction at $\FreeH X$
\begin{equation*}
  \begin{tikzcd}
    \FreeH X \ar[two heads, swap]{d}{\etaStr_{\FreeH X}}
    \\
    U \strictify \FreeH X
  \end{tikzcd}
\end{equation*}
is an acyclic fibration of globular multicategories with homomorphism types.
Furthermore, $\eta_X$ is bijective on types and $0$-terms.
\end{theorem}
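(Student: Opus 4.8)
The plan is to apply the characterization of acyclic fibrations in Proposition~\ref{acyclicFibrationsAlternative} to the unit $\etaStr_{\FreeH X} \colon \FreeH X \to U\strictify\FreeH X$, checking conditions \ref{typeLifting}, \ref{surjective} and \ref{faithful} in turn. The governing observation is that strictification only ever identifies terms: the reflector $\strictify$ quotients by the congruence $\approx$ generated by the strictness equations ``$r_x ; f = r_x ; f'$'', and these equations involve no types. Hence $\etaStr_{\FreeH X}$ is an isomorphism of globular sets on types, which gives \ref{typeLifting} at once, an isomorphism on types trivially admitting all fillers against the boundary-inclusions $\iota_n$. Condition \ref{surjective} is also immediate: because $\strictify$ is a reflection onto the full subcategory $\globMultStrId$, its unit is the quotient map of the congruence and so is surjective on terms in every dimension.

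The heart of the argument is condition \ref{faithful}. Here I would introduce the relation $v \simeq v'$, defined to hold for parallel terms $v, v'$ of $\FreeH X$ exactly when there is a transformation $\phi \colon v \to v'$ with $\etaStr_{\FreeH X}(\phi) = \etaStr_{\FreeH X}(v) ; r_A$; note that this forces $\etaStr_{\FreeH X}(v) = \etaStr_{\FreeH X}(v')$, so $\simeq \subseteq \approx$. Since $\etaStr_{\FreeH X}(v) = \etaStr_{\FreeH X}(v')$ is equivalent to $v \approx v'$, condition \ref{faithful} is precisely the reverse inclusion $\approx \subseteq \simeq$. For the generating relation, $r_x ; v = r_x ; v'$ for some variable $x$, and Lemma~\ref{reflexEquiv} supplies the transformation $J_x^{v, v'}(r_x ; v ; r_A) \colon v \to v'$; because $U\strictify\FreeH X$ has strict homomorphism types, the same lemma computes its image as $\etaStr_{\FreeH X}(v) ; r_A$. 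Thus $\simeq$ contains the generators. As these generators are symmetric in $v$ and $v'$, and the closure of a symmetric relation under reflexivity, transitivity and operation-compatibility is automatically symmetric and hence a congruence, it suffices to show that $\simeq$ is closed under reflexivity, transitivity and compatibility with the term operations; this yields $\approx \subseteq \simeq$ with no need to invert transformations.

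Closure under reflexivity uses the identity transformation $v ; r_A$, whose image is again an identity. Transitivity uses the composite $- \circ -$: homomorphisms preserve $- \circ -$, and a composite of identity transformations at a common term is that identity transformation, since $\etaStr_{\FreeH X}(v) ; r_A$ is a unit for $- \circ -$. Compatibility with the composition $f ; g$ and with substitution is handled by whiskering the transformations and pasting; here the J-term structure again provides the required mediating transformations, and since $\etaStr_{\FreeH X}$ carries each building block to an identity transformation, the whiskered composite maps to an identity as well. For the final claim, the reflexivity substitution $r_x$ has identity source and target, so $r_x ; f = r_x ; f'$ forces $sf = sf'$ and $tf = tf'$: the generators relate only parallel terms, applying $s$ or $t$ to a generating pair gives a trivial relation, and composition preserves dimension. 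Consequently $\approx$ is trivial below the least dimension of a generator, and in particular on $0$-terms, which carry no homomorphism types at all; so $\etaStr_{\FreeH X}$ is injective, hence by \ref{surjective} bijective, on $0$-terms, while bijectivity on types was already noted.

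I expect the main obstacle to be the compatibility of $\simeq$ with term-composition and substitution. Assembling the locally defined transformations of Lemma~\ref{reflexEquiv} into a single coherent transformation $v \to v'$ requires careful bookkeeping of how whiskering and $- \circ -$ interact with $;$, and it is precisely here that the weak higher-categorical coherence furnished by the homomorphism types must be used in full.
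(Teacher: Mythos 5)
Your overall frame agrees with the paper's: both proofs run through Proposition~\ref{acyclicFibrationsAlternative}, both observe that strictification is bijective on types and surjective on terms (so conditions \ref{typeLifting} and \ref{surjective} are easy), and both reduce everything to weak reflection of identities, to be established via J-terms and Lemma~\ref{reflexEquiv}. But your treatment of condition \ref{faithful} has two gaps, one of which is fatal as written. First, you mischaracterize the congruence that $\strictify$ quotients by: it is \emph{not} the congruence generated by the pairs with $r_x ; f = r_x ; f'$ on the nose. Strictness of the quotient forces the defining rule to be conditional on the relation itself, namely $r_x ; f \sim r_x ; f' \implies f \sim f'$ (this is exactly how the paper defines $\sim$), and the congruence generated by the on-the-nose equations may be strictly smaller, with a non-strict quotient. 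Consequently your induction over the congruence is missing a closure step: given an identification $\psi \colon r_x ; v \to r_x ; v'$ you must produce one $v \to v'$. This step is fixable --- $J_x^{v,v'}(\psi)$ is a transformation $v \to v'$, and strictness in $U\strictify\FreeH X$ shows its image is an identity --- but your generators-only argument does not cover it.

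The second gap is the decisive one: closure of your relation $\simeq$ under $-;-$ and $-\odot_k-$, i.e.\ manufacturing a single identification $f;g \to f';g'$ out of identifications $\phi \colon f \to f'$ and $\psi \colon g \to g'$, is precisely the weak whiskering/horizontal-composition problem where all the higher-categorical difficulty of the theorem lives, and you explicitly defer it (``the main obstacle \ldots careful bookkeeping''). A proof that leaves this open is incomplete exactly where the statement is nontrivial. The paper's proof is structured to avoid ever needing this in general: since $V$ is faithful and $U$ preserves limits, the comparison map $U\beta_{\strictify \FreeH X}$ is monic, so it suffices that $\theta_{\FreeH X} \colon \FreeH X \to UV\FreeCat X$ weakly reflects identities (Lemma~\ref{monGlobCatLemma}); and in $UV\FreeCat X$ every term has a normal form $\gamma_1 ; \cdots ; \gamma_l$ as a composite of substitutions coming from $X$, so one inducts on $l$. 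In that induction the only ``whiskering'' ever required is precomposition by the fixed substitution $\gamma_1$ followed by one application of a J-term, $J_H^{f,f'}(r_{H_{\gamma_1}} ; \gamma_1 ; \phi)$ --- no general horizontal composite of transformations is constructed. To complete your argument you would either have to carry out the deferred compatibility step in full generality (substantially harder than anything in the paper's proof) or adopt a reduction of this normal-form kind.
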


\begin{proof}
  We first describe the strictification adjunction $S \dashv U$ explicitly.
  The globular multicategory $\strictify Y$ is the result of forcing $Y$ to satisfy the additional requirement that
  for any $n > 0$, $n$-terms $f, f'$ and any variable $x$ such that
  \[
    \begin{aligned}
      r_x ; f = r_x ; f'
    \end{aligned},
  \]
  we have that
  \[
    \begin{aligned}
      f = f'
    \end{aligned}.
  \]
  Hence,
  for every pair of contexts  $\Gamma, \Delta$,
  we define an equivalence relation $\sim$
  relating pairs of substitutions $f, f' : \Gamma \to \Delta$
  using the following inductively defined rules:
  \begin{itemize}
    \item This relation is reflexive, symmetric and transitive.

    \item For any variable $x$ and terms $f, f'$,
    we have that
    \[
      \begin{aligned}
        r_x ; f \sim r_x ; f' \implies f \sim f'
      \end{aligned}.
    \]

    \item Whenever we have substitutions $f, f', g, g'$ such that
    $t_k f = s_k g$ and $t_k f' = s_k g'$,
    we have that
    \[
      \begin{aligned}
        f  \sim f' &&\text{and}&& g \sim g' \implies f \odot_k g \sim f' \odot_k g'
      \end{aligned}.
    \]

    \item Whenever $f : \Gamma \to \Delta$ and $g : \Delta \to A$,
    we have that
    \[
      \begin{aligned}
        f  \sim f' &&\text{and}&& g \sim g' \implies f ; g \sim f' ; g'
      \end{aligned}.
    \]
  \end{itemize}
  Since terms are the same as substitutions whose target context is a type,
  this defines an equivalence relation on terms.
  The globular multicategory $\strictify Y$ has the same types as $Y$ and has equivalence classes of terms under $\sim$ as terms.
  The rules defining $\sim$ ensure that the composition and $J$-rules of $\strictify Y$ are well defined and that the homomorphism types are strict.
  Furthermore, the unit $\eta_Y : Y \to U \strictify Y$ is the homomorphism that quotients by $\sim$.

  Using this description of $\sim$ we see that pointwise $\eta$ is bijective on types, bijective on $0$-terms and surjective on terms.
  Hence by Proposition \ref{acyclicFibrationsAlternative},
  it suffices to show that $\eta_{\FreeH X}$  weakly reflects identities of terms.
  Lemma \ref{reflexEquiv} will do most of the work for us and allow us to construct a sort of ``normalization procedure'' for terms in $\FreeH X$.
  The resulting ``normal form'' actually lives in the monoidal globular category $\FreeCat X$. In other words, by plugging in reflexivity terms, we can always obtain a formal composite of generators in $X$.

  Consider the following commutative diagram:
  \begin{equation*}
    \begin{tikzcd}
      \FreeH X \ar[two heads, swap]{d}{\etaStr_{\FreeH X}}
      \ar[r, equal]
      &
      \FreeH X
      \ar[d, two heads, "\theta_{\FreeH X}"]
      \\
      U \strictify \FreeH X
      \ar[r, hook, "U \beta_{\strictify \FreeH X}" below]
      &
      U V \FreeCatStr S \FreeH X
      \ar[r, equal]
      &
      UV \FreeCat X
    \end{tikzcd}
  \end{equation*}
  Since $V$ is faithful and $U$ preserves limits,
  the bottom arrow is monic.
  It follows that $\eta_{\FreeH X}$ weakly reflects identities of terms
  if $\theta_{\FreeH X}$ weakly reflects identities of terms.
  This is Lemma \ref{monGlobCatLemma}\ref{weakIdentities} below.
\end{proof}

\begin{lemma}\label{monGlobCatLemma}
Let $X$ be a reflexive globular multigraph. Then we have the following results:
\begin{enumerate}[label=(\roman*)]
  \item The unit \label{weakIdentities}
  \begin{equation*}
    \begin{tikzcd}
      \FreeH X
      \ar[d, two heads, "\theta_{\FreeH X}"]
      \\
      UV \FreeCat X
    \end{tikzcd}
  \end{equation*}
  weakly reflects identities of terms.

  \item The unit \label{strictIdentities}
  \begin{equation*}
    \begin{tikzcd}
      S \FreeH X
      \ar[d, two heads, "\beta_{S \FreeH X}"]
      \\
      V G X
    \end{tikzcd}
  \end{equation*}
  strictly reflects identities of terms.
  The furthermore part is easily verified.
\end{enumerate}
\end{lemma}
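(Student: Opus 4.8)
The plan is to prove both statements at once by exhibiting a normalization procedure that connects every term of $\FreeH X$ to a canonical ``formal composite of generators'' living in $\FreeCat X$, with Lemma \ref{reflexEquiv} supplying the comparison transformations. First I would make the target explicit. Since $\FreeCat X = \FreeCatStr S \FreeH X$ is the globular multicategory underlying a strict monoidal globular category, it has strict homomorphism types, and by the coherence theorem for monoidal globular categories \cite{Batanin1998Monoidal-Globul} its terms are exactly the formal composites of generators of $X$ built from the substitution operations $-;-$, the pastings $-\odot_k-$, and identities. In particular $\theta_{\FreeH X}$ introduces no new identifications among such formal composites, so there is a section $s$ on terms with $\theta_{\FreeH X} \circ s = \id$; writing $\lfloor v \rfloor := s(\theta_{\FreeH X}(v))$ we obtain, for each term $v$, a canonical representative in $\FreeH X$ that depends only on $\theta_{\FreeH X}(v)$.

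The heart of the argument is the following normalization claim: for every term $v$ of $\FreeH X$ there is a transformation $\nu_v : v \to \lfloor v \rfloor$ whose image under $\theta_{\FreeH X}$ is an identity transformation, together with a reverse transformation $\lfloor v \rfloor \to v$ with the same property. I would prove this by induction on the generation of $v$ as a term of the free globular multicategory with homomorphism types. Generators, identities, and the source and target operations are immediate, and the cases of $-;-$ and $-\odot_k-$ follow by composing and whiskering the transformations produced for the constituents, using that $\theta_{\FreeH X}$ preserves $-\circ-$, so that a composite of transformations lying over identities again lies over an identity. The only substantial case is that of a J-term $J_x(f)$ (and its boundary-decorated variant $J^{j_s, j_t}_x(f)$). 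Here I would compare $J_x(f)$ with the formal composite $g$ on $\Gamma \oplus \IdType_x$ obtained from $f$ by inserting the unit $Z$ at $x$: the defining equation $r_x ; J_x(f) = f$ and the unit law for $g$ give $r_x ; J_x(f) = r_x ; g$ once $f$ has been replaced by its normal form, and Lemma \ref{reflexEquiv} then yields a transformation $J_x(f) \to g$ lying over an identity, with $g = \lfloor J_x(f) \rfloor$; propagating $\nu_f$ through $J_x$ by means of the boundary-decorated J-terms is what allows $f$ to be replaced by $\lfloor f \rfloor$ at this step.

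Granting the normalization claim, statement \ref{weakIdentities} is immediate: if $v, v'$ are parallel terms of $\FreeH X$ with $\theta_{\FreeH X}(v) = \theta_{\FreeH X}(v')$, then $\lfloor v \rfloor = \lfloor v' \rfloor$, so $\nu_v$ followed by the reverse of $\nu_{v'}$ is a transformation $\phi : v \to v'$ lying over an identity, i.e. an identification; hence $\theta_{\FreeH X}$ weakly reflects identities of terms. For statement \ref{strictIdentities} I would run the same construction inside $S \FreeH X$, where homomorphism types are strict. By the strict clause of Lemma \ref{reflexEquiv} each comparison transformation in the induction becomes an actual equality, so the normalization claim degenerates to $v = \lfloor v \rfloor$. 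Therefore $\beta_{S\FreeH X}(v) = \beta_{S\FreeH X}(v')$ forces $v = \lfloor v \rfloor = \lfloor v' \rfloor = v'$, so $\beta_{S\FreeH X}$ is injective on terms; the identification may then be taken to be the identity transformation $v ; r_A$, which is the ``furthermore'' part.

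The main obstacle I anticipate is the J-term step of the induction. Concretely, the delicate points are producing the unit-inserted formal composite $g$ on the decorated context $\Gamma \oplus \IdType_x$ and verifying $r_x ; J_x(f) = r_x ; g$ so that Lemma \ref{reflexEquiv} applies, together with the dimension bookkeeping needed to push the inductive transformation $\nu_f$ through $J_x$ via the boundary-decorated J-terms. This rests on matching the insertion of a homomorphism type $\IdType_x$ in $\FreeH X$ with the insertion of the unit $Z$ in the free monoidal globular category, and on the coherence theorem guaranteeing that $\theta_{\FreeH X}$ is faithful on formal composites; keeping the normal form genuinely independent of the chosen presentation of $v$ is where the real care is required.
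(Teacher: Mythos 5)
Your overall strategy---normalize terms toward formal composites of generators and let Lemma \ref{reflexEquiv} supply the comparisons---is the one the paper announces, and your treatment of the J-term case (using the boundary-decorated $J_x^{j_s,j_t}$ to transport a transformation across $r_x ; -$) is exactly the right trick. But two steps of your induction fail as written. First, the canonical representative $\lfloor v \rfloor$ cannot have the properties you need. A transformation $\nu_v : v \to \lfloor v \rfloor$ only typechecks between \emph{parallel} terms, so $\lfloor v \rfloor$ must have the same source context $\Gamma$ (which may contain homomorphism types) and the same target type as $v$; but a lift of the normal form $\gamma_1 ; \cdots ; \gamma_l$ as a genuine formal composite of generators of $X$ has an $X$-context as its source, not $\Gamma$. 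Moreover $\theta_{\FreeH X}$ is not surjective on terms (no term of $\FreeH X$ has target type mapping to a $\otimes_k$-composite), so there is no global section; one can only pick representatives in nonempty fibers, and forcing those representatives to be parallel to $v$ requires re-decorating them with J-terms, which destroys the ``plain formal composite'' shape on which your applications of Lemma \ref{reflexEquiv} rely.

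The more serious gap is the inductive step for $-;-$ and $-\odot_k-$. From $\nu_f : f \to \lfloor f \rfloor$ and a following term $h$ you need a transformation $f ; h \to \lfloor f \rfloor ; h$, but whiskering a transformation between substitutions with a term applied \emph{afterwards} is not an operation available in the paper's setting: the composition $- \circ -$ of Section \ref{homotopy} composes along the other boundary. Constructing this whiskering amounts to the weak action of terms on transformations, which itself must be built from iterated boundary-decorated J-terms---that is, you would be assuming a piece of the weak-functoriality structure that Theorem \ref{internalStrucutre} is in the business of establishing. The paper's proof avoids normalizing single terms altogether: given the parallel pair $f, f'$ with $\theta_{\FreeH X}(f) = \theta_{\FreeH X}(f') = \gamma_1 ; \cdots ; \gamma_l$, it plugs the reflexivity substitution $r_H$ into \emph{both} terms at all homomorphism-type variables, proving the exact equalities $r_H ; f = r_{H_{\gamma_1}} ; \gamma_1 ; f_{>1}$ and $r_H ; f' = r_{H_{\gamma_1}} ; \gamma_1 ; f'_{>1}$, inducts on the length $l$ of the common normal form (the base case $l = 0$ being precisely your Lemma \ref{reflexEquiv} step), and then lifts the inductively obtained identification $\phi : f_{>1} \to f'_{>1}$ back through the reflexivities as $J_H^{f,f'}(r_{H_{\gamma_1}} ; \gamma_1 ; \phi)$. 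Reworking your argument as an induction on the length of the common normal form of the pair $(f, f')$, rather than a structural induction on a single term, repairs it; your reduction of part \ref{strictIdentities} to the strict clause of Lemma \ref{reflexEquiv} then goes through unchanged.
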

\begin{proof}
  We will prove \ref{weakIdentities}.
  The proof of \ref{strictIdentities} is similar except all identifications are actual identities.

  Let $X$ be a reflexive globular set.
  Then explicitly:

  \begin{itemize}
    \item $\FreeH X$ is generated from $X$ by freely adding $-;-$ composition, reflexivity terms and J-terms.

    \item $UV \FreeCat X$ is freely generated from $X$ by freely adding $- ; -$ composition and $-\otimes_k$- composition for each $k$.
    Equivalently a term in $UV \FreeCat X$ is a formal composite $\gamma_1 ; \cdots ; \gamma_l$ of substitutions in $X$.

    \item The homomorphism $\theta_{\FreeH X}$ maps types in $\FreeH X$ to types in  $UV \FreeCat X$  up to ``removing homomorphism types''.
    That is,
    for each  $n$-type $A$, we set
    \[
      \begin{aligned}\
      [A] \sim \IdType_A
    \end{aligned}.
  \]
  Substitutions in $\FreeH X$ are mapped to substitutions in $X$ up to ``removing homomorphism types''.  That is,
  for each term $f$, we set
  \[
    \begin{aligned}\
    [f] \sim [f ; r_A]
  \end{aligned}
\]
and for each variable $x$ and any J-rule at $x$, we set
\[
  \begin{aligned}\
  [f] \sim [J_x(f)]
\end{aligned}.
\]
We also require that $-;-$ and $-\odot_k-$ are respected.
Since terms are substitutions whose target is a type, this defines a homomorphism.

\end{itemize}

Now let $f, f' : \Gamma \to \Delta$ be substitutions in $\FreeH X$.
such that $\theta_{\FreeH X}(f) = \theta_{\FreeH X}(f')$.
Let $H$ be the set of homomorphism type variables in $\Gamma$.

As stated above, the equivalence class $\theta_{\FreeH X}(f)$ corresponds to a composable sequence
\[
  \begin{aligned}
    \gamma_1 ; \gamma_2  ; \cdots ; \gamma_l
  \end{aligned}
\]
of substitutions in $X$.
We will now prove the claim by induction on $l$.

First suppose that $l = 0$.
By induction on $f$,
we must have that
\[
  \begin{aligned}
    r_{H} ; f = R
  \end{aligned}
\]
where $R$ is some composite of identity and reflexivity terms.
Similarly
\[
  \begin{aligned}
    r_{H} ; f = R'
  \end{aligned}
\]
where $R'$ is some composite of identity and reflexivity terms.
Since $f$ and $f'$ have the same target, namely $\Delta$, it follows that $R = R'$.
Hence, since the source contexts of $f$ and $f'$ are the same,
we have an identification $f \to f'$ by Lemma \ref{reflexEquiv}.

Now suppose that $l > 0$.
Let $H_{\gamma_1}$ be the set of homomorphism type variables in the source context of $\gamma_1$.
By induction on the rules defining $\theta$, it follows that
\[
  \begin{aligned}
    r_H ; f = r_{H_{\gamma_1}} ; \gamma_1 ; f_{>1}
  \end{aligned}
\]
for some term $f_{>1}$ such that $\theta_{\FreeH X}(f_{>1}) = \gamma_2 ; \cdots ; \gamma_l$.
Similarly we have that
\[
  \begin{aligned}
    r_H ; f' = r_{H_{\gamma_1}} ; \gamma_1 ; f'_{>1}
  \end{aligned}
\]
for some term $f'_{>1}$ such that $\theta_{\FreeH X}(f'_{>1}) = \gamma_2 ; \cdots ; \gamma_l$.
Now, by induction,
we have an identification
\[
  \begin{aligned}
    \phi : f_{>1} \longrightarrow f'_{>1}
  \end{aligned}
\]
Hence we have an identification
\[
  \begin{aligned}
    J_H^{f, f'}(r_{H_{\gamma_1}} ; \gamma_1 ; \phi) : f \longrightarrow f'
  \end{aligned}.
\]
The result follows by induction.
\end{proof}

\begin{example}
  Let $X$ be a globular multicategory with strict homomorphism types
  and Let $0$ denote the generic $0$-type.
  Then it  follows from our explicit descriptions that $U \strictify \FreeH 0$ is the terminal globular operad.
  Thus, Theorem \ref{internalStrucutre} tells us  that $\FreeH 0$ is a normalized contractible globular operad.
  By the Yoneda Lemma and adjointness, every $0$-type $A$ in $X$ corresponds,
  to a homomorphism $A : \FreeH 0 \to X$ of globular multicategories with homomorphism types.
  It follows that every $0$-type in $X$ with its tower of homomorphism types has the structure of a weak $\omega$-category.
\end{example}

\begin{example}
  Let $I_0$ be the generic $0$-term.
  Then $\strictify \FreeH I_0$ can be seen as the algebraic theory describing a strict
  functor between strict $\omega$-categories.
  The fact that there is an acyclic fibration $\FreeH I_0 \to U \strictify \FreeH I_0$ can be viewed as saying that $\FreeH I_0$ is a weak functor between weak $\omega$-categories.
  Consequently every $0$-term in a globular multicategory with homomorphism types has the structure of a weak functor.
\end{example}

More generally,
Theorem \ref{internalStrucutre} can be seen as saying that,
in a globular multicategory with homomorphism types,
\begin{itemize}
  \item The $0$-types are weak $\omega$-categories.
  \item The $1$-types are weak profunctors.
  \item The $2$-types are weak profunctors between weak profunctors.
  \item \ldots
  \item The $0$-terms are weak $\omega$-functors
  \item The $1$-terms are weak transformations between profunctors
  \item The $2$-terms are weak transformations between $2$-types
  \item \ldots
\end{itemize}

Recall that a transformation between parallel $n$-terms $f, f' : \Gamma \to A$
is a term $\phi : \Gamma \to \IdType_A$ such that
$s \phi = f$ and $t\phi = f'$.
In other words,
a transformation is an abstract assignment taking an object in $\Gamma$ and outputting an arrow in $A$ from the output of $f$ to the output of $f'$.
Hence transformations between $0$-terms,
can be seen as natural transformations.
Similarly transformations between transformations between $0$-terms can be seen as modifications.
Continuing in this way, let $\mathbb{T}_0 : \glob^\op \to \rGlobGraph^\op$ be the globular object defined by:
\[
 \begin{aligned}
   \mathbb{T}_0(n) &=
   \begin{cases}
     \text{the generic $0$-type } \bullet & \text{if  $n = 0$}\\
     \text{the generic term } u_{n-1} : A \to \IdType^{n - 1} B & \text{if $n > 0$}
   \end{cases}
   \\
   \mathbb{T}(\sigma_n) & =
   \begin{cases}
     \text{the inclusion of } A & \text{if  $n = 0$}\\
     \text{the inclusion of }s u_n  & \text{if $n > 0$}
   \end{cases}
   \\
   \mathbb{T}(\tau_n)  &=
   \begin{cases}
     \text{the inclusion of } B & \text{if  $n = 0$}\\
     \text{the inclusion of }t u_n & \text{if $n > 0$}
   \end{cases}
 \end{aligned}
\]
For any globular multicategory with homomorphism types $X$, we have a globular set $n \mapsto \hom(\FreeH^\op \arrowTheoryT(n), X)$ of $0$-types, $0$-terms and transformations between them in $X$.
We will show that this globular set has the structure of a weak $\omega$-category.

\begin{theorem}
  The operad $\End(\FreeH^\op \arrowTheoryT)$ is normalized and contractible.
\end{theorem}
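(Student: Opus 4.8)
The plan is to realise $\End(\FreeH^\op\arrowTheoryT)$ as the domain of an acyclic fibration onto the terminal operad $\top$ and then invoke Example~\ref{operadFibrations}, which says that a normalized operad admitting an acyclic fibration to $\top$ is precisely a normalized contractible operad. The acyclic fibration will come from Theorem~\ref{endAcyclic}. Concretely, I would apply that theorem to the globular object $X = \FreeH^\op\arrowTheoryT : \glob^\op \to \globMultId^\op$, taking $\mathcal{C} = \globMultId$ with its weak factorization system $(\mathcal{L},\mathcal{R})$ of cofibrations and acyclic fibrations, $\mathcal{D} = \globMultStrId$, the pullback-preserving functor $S = \strictify$, the forgetful functor $U : \globMultStrId \to \globMultId$, and $\phi = \etaStr : \id \Rightarrow U\strictify$. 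If the hypotheses hold, the theorem delivers an acyclic fibration $\End(\strictify^\op) : \End(\FreeH^\op\arrowTheoryT) \to \End(\strictify^\op\FreeH^\op\arrowTheoryT)$ of globular operads.

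The two routine hypotheses I would dispatch first. The functor $U$ is faithful because strictness of homomorphism types is a property, so $\globMultStrId$ is a full subcategory of $\globMultId$. For the boundary inclusions, each structural map of $\arrowTheoryT$ is a boundary inclusion of a generic term and hence a monomorphism, i.e.\ a cofibration in $\rGlobGraph$; since $\FreeH$ preserves cofibrations, each $X(\iota_n)$ is an $\mathcal{L}$-map in $\globMultId$.

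The substantive hypothesis is that $\etaStr_{\hat X(\pi)}$ is an acyclic fibration and an epimorphism for every pasting diagram $\pi$. Here I would exploit that each $\arrowTheoryT(n)$ is a reflexive globular multigraph, so Theorem~\ref{internalStrucutre} applies directly and $\etaStr_{\FreeH\arrowTheoryT(n)}$ is an acyclic fibration; being the quotient homomorphism it is also a regular epimorphism. Since $T$ is cartesian and $\pi$ is assembled from globes by the pushouts $\pi_1 +_k \pi_2$, the cocontinuous extension carries these pushouts of globular sets to pullbacks in $\globMultId$, so $\hat X(\pi)$ is an iterated pullback of the objects $\FreeH\arrowTheoryT(n)$ along boundary inclusions. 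Granting that $\strictify$ preserves these pullbacks (and using that $U$ preserves all limits), $\etaStr_{\hat X(\pi)}$ is exactly the map induced on iterated pullbacks by the family of acyclic fibrations $\etaStr_{\FreeH\arrowTheoryT(n)}$; as the right class of a weak factorization system is closed under pullback and products, and the relevant surjections are stable under these pullbacks, $\etaStr_{\hat X(\pi)}$ is an acyclic fibration and an epimorphism. I expect the verification that $\strictify$ preserves these pasting-diagram pullbacks — equivalently, that the reflector onto $\globMultStrId$ is left exact on the diagrams at hand — to be the main obstacle, and the point where the cartesianness of $T$ must be combined with the explicit congruence description of $\strictify$ from the proof of Theorem~\ref{internalStrucutre}.

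It then remains to identify the codomain and to check normalization. Using the explicit strictification together with the computation $U\strictify\FreeH\arrowTheoryT(0) \cong \top$, I would argue that $\End(\strictify^\op\FreeH^\op\arrowTheoryT)$ is the terminal operad: in the strict setting every $\pi$-shaped pasting diagram of generic $0$-cells has a unique composite, so there is exactly one $\pi$-shaped $n$-term of each shape. Composing then gives an acyclic fibration $\End(\FreeH^\op\arrowTheoryT) \to \top$. Finally, since $\etaStr$ is bijective on $0$-terms by Theorem~\ref{internalStrucutre}, the induced map is bijective on $0$-terms, so $\End(\FreeH^\op\arrowTheoryT)$ has a unique $0$-term and is normalized. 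By Example~\ref{operadFibrations} a normalized operad admitting an acyclic fibration to $\top$ is normalized and contractible, which is the desired conclusion.
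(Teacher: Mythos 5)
Your overall strategy is exactly the paper's: obtain an acyclic fibration $\End(\FreeH^\op\arrowTheoryT)\to\End(\strictify^\op\FreeH^\op\arrowTheoryT)$ from Theorem~\ref{endAcyclic} with $S=\strictify$, $U$ the forgetful functor and $\phi=\etaStr$, identify the codomain with the terminal operad, and conclude via Example~\ref{operadFibrations}. The problem is your verification of the crucial hypothesis that $\phi_{X(\pi)}$ is an $\mathcal{R}$-map, which goes wrong at precisely the point you flag as the ``main obstacle'', and the obstacle is an artifact of a variance error. The objects $X(\pi)$ are \emph{not} iterated pullbacks in $\globMultId$. Unwinding the $(-)^\op$'s (which are present only so that $\End$ has the right variance: a $\pi$-shaped $n$-term of $\End(X)$ is a boundary-preserving homomorphism $X(n)\to X(\pi)$ in $\globMultId$), the object $X(\pi)$ is the \emph{gluing} of the $\FreeH\arrowTheoryT(m)$'s along boundary inclusions, i.e.\ an iterated \emph{pushout}. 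Since $\FreeH$ is a left adjoint it preserves these colimits, so $X(\pi)\cong\FreeH\bigl(\arrowTheoryT(\pi)\bigr)$ where $\arrowTheoryT(\pi)$ is the reflexive globular multigraph obtained by gluing the graphs $\arrowTheoryT(m)$. Hence $\etaStr_{X(\pi)}$ is the unit of the strictification adjunction at a \emph{free} object, Theorem~\ref{internalStrucutre} applies verbatim, and it is an acyclic fibration; it is an epimorphism because it is the surjective quotient by $\sim$. No left exactness of $\strictify$ is needed anywhere --- which is fortunate, since $\strictify$ is a quotienting reflector and cannot be expected to preserve pullbacks; what the hypothesis on $S$ amounts to after unwinding the op's is preservation of the gluing pushouts, automatic for a left adjoint. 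As written, your argument rests on an exactness claim that you neither prove nor have reason to believe, so this step is a genuine gap; the appeals to cartesianness of $T$ and to pullback-stability of the right class are red herrings.

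There is a second gap in the identification of the codomain with the terminal operad. A $\pi$-shaped $n$-term of $\End(\strictify^\op\FreeH^\op\arrowTheoryT)$ is a boundary-preserving homomorphism of arrow theories $\strictify\FreeH\arrowTheoryT(n)\to\strictify\FreeH\arrowTheoryT(\pi)$, and the statement to prove is that there is exactly one such; the slogan ``in the strict setting every pasting diagram has a unique composite'' is not by itself a computation of these hom-sets. This is where the paper does real work: the lemma on lifting composable pairs, Proposition~\ref{arrowComposites}, and Corollary~\ref{classifyArrowTheories} show that $\beta_Y$ is bijective on types and terms for arrow theories, yielding a fully faithful functor $\mathbb{I}:\arrowTheory\to\StrOmegaCat$ under which the homomorphisms in question become boundary-preserving (active) strict functors $Tn\to T\pi$, of which there is exactly one. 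Without this identification, terminality is an assertion rather than a proof. Your normalization argument, on the other hand, is fine (and can be shortened: the $0$-terms of $\End(\FreeH^\op\arrowTheoryT)$ are the boundary-preserving endomorphisms of $\FreeH\arrowTheoryT(0)$, and there is exactly one).
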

The following result is an immediate application.
\begin{corollary}
  The $0$-types, $0$-terms and transformations between them in a globular multicategory with homomorphism types form a weak $\omega$-category.
\end{corollary}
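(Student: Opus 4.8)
The plan is to deduce the statement directly from the preceding Theorem. That Theorem provides a normalized contractible globular operad $\End(\FreeH^\op \arrowTheoryT)$, and Example \ref{operadFibrations} records that the algebras of a normalized contractible globular operad are exactly the weak $\omega$-categories in the sense of Leinster. So it suffices to exhibit the globular set of $0$-types, $0$-terms and transformations of $X$ as an algebra of $\End(\FreeH^\op \arrowTheoryT)$, and then read off the conclusion.

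First I would fix a globular multicategory with homomorphism types $X$ and form the representable functor
\[
G = \hom_{\globMultId}(-, X) : \globMultId^\op \longrightarrow \set.
\]
Precomposing the globular object $\FreeH^\op \arrowTheoryT : \glob^\op \to \globMultId^\op$ with $G$ yields exactly the globular set
\[
Y : n \longmapsto \hom_{\globMultId}(\FreeH \arrowTheoryT(n), X)
\]
of $0$-types, $0$-terms and transformations in $X$ described above. The key observation is that $G$ is \emph{continuous}: since $\hom_{\globMultId}(\colimit_i A_i, X) \cong \limit_i \hom_{\globMultId}(A_i, X)$, the functor $G$ carries colimits of $\globMultId$ to limits of $\set$, i.e.\ preserves all limits of $\globMultId^\op$. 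In particular it preserves the pullbacks and limits used to compose $T$-spans and to form the objects $\Gamma_0$ in the construction of $\spanT(-)$.

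Next I would invoke functoriality of the span and endomorphism constructions along limit-preserving functors. Any limit-preserving functor $H : \mathcal{C} \to \mathcal{D}$ induces a morphism $\spanT(\mathcal{C}) \to \spanT(\mathcal{D})$ by postcomposing $n$-spans $\el(n)^\op \to \mathcal{C}$ with $H$, and hence a homomorphism of globular operads $\End(Z) \to \End(H Z)$ for every globular object $Z$ in $\mathcal{C}$. Applying this with $H = G$ and $Z = \FreeH^\op \arrowTheoryT$ produces a homomorphism of globular operads
\[
\End(\FreeH^\op \arrowTheoryT) \longrightarrow \End(Y).
\]
By the universal property of the endomorphism operad, a homomorphism $P \to \End(Y)$ is precisely the data of a $P$-algebra structure on the globular set $Y$; thus the displayed homomorphism makes $Y$ an algebra of $\End(\FreeH^\op \arrowTheoryT)$. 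Since the preceding Theorem shows this operad is normalized and contractible, Example \ref{operadFibrations} gives that $Y$ is a weak $\omega$-category, which is the assertion.

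I expect the main obstacle to be the careful bookkeeping behind the two functorialities in the opposite category. In particular one must confirm that $\globMultId$ is genuinely (co)complete, so that $\globMultId^\op$ has the pullbacks that $\End$ requires and the globular object $\FreeH^\op \arrowTheoryT$ lies within the scope of the endomorphism construction that underlies the Theorem; and one must check that $G$ preserves exactly those limits appearing in span composition and in the boundary-respecting condition defining $\End$, so that the induced map of operads really exists. Once these are in place, the identification of $Y$ with the globular set of $0$-types, $0$-terms and transformations, together with the reduction to Example \ref{operadFibrations}, is routine.
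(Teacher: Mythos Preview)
Your proposal is correct and matches the paper's intended argument: the paper states the corollary as an ``immediate application'' of the preceding Theorem without further proof, and your expansion via the representable functor $\hom_{\globMultId}(-,X)$ and the functoriality of $\End$ along pullback-preserving functors is exactly the standard way to cash this out. The bookkeeping caveats you flag (cocompleteness of $\globMultId$, preservation of the relevant limits) are routine and are implicitly assumed by the paper as well.
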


First note that Theorem \ref{endAcyclic} gives us the following result:

\begin{prop}
  The homomorphism $\End(\FreeH^\op \arrowTheoryT) \to \End(\strictify^\op \FreeH^\op \arrowTheoryT)$ is an acyclic fibration of normalized globular multicategories.
\end{prop}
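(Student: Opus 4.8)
The plan is to obtain the statement as a direct application of Theorem~\ref{endAcyclic}. I would instantiate that theorem by taking $\mathcal{C} = \globMultId$, equipped with its weak factorization system $(\mathcal{L}, \mathcal{R})$ of cofibrations and acyclic fibrations, and $\mathcal{D} = \globMultStrId$; for the functor $S$ I would take the strictification $\strictify : \globMultId \to \globMultStrId$, for $U$ the forgetful functor, and for $\phi : \id \Rightarrow US$ the unit $\etaStr$ of the adjunction $\strictify \dashv U$. Finally I would take the globular object $X = \FreeH^\op \arrowTheoryT$ in $\globMultId^\op$. With these choices $S^\op X = \strictify^\op \FreeH^\op \arrowTheoryT$, so the conclusion of Theorem~\ref{endAcyclic} is exactly that $\End(\FreeH^\op \arrowTheoryT) \to \End(\strictify^\op \FreeH^\op \arrowTheoryT)$ is an acyclic fibration. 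It then remains to discharge the three hypotheses.

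The first two hypotheses are formal. The functor $U$ is faithful because strictness of homomorphism types is a property rather than extra structure, so $\globMultStrId$ is a full reflective subcategory of $\globMultId$ and $U$ is its fully faithful inclusion. For the second, unwinding the definition of $\arrowTheoryT$ shows that the boundary inclusion $X(\iota_n)$ is the image under $\FreeH$ of the boundary inclusion $\iota_{u_{n-1}} : \partial u_{n-1} \to u_{n-1}$ of the generic term in $\rGlobGraph$. This is a monomorphism, hence a cofibration, and since $\FreeH$ preserves cofibrations its image is again a cofibration, i.e.\ an $\mathcal{L}$-map.

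The third hypothesis carries the real content, and here I would invoke Theorem~\ref{internalStrucutre}. Since both $\FreeH$ and the cocontinuous extension used in the definition of $\End$ preserve colimits, we have $X(\pi) = \FreeH(\arrowTheoryT(\pi))$, where $\arrowTheoryT(\pi)$ is the gluing of the generic types and terms along $\pi$; being a colimit of reflexive globular multigraphs it is itself a reflexive globular multigraph. Hence $\phi_{X(\pi)} = \etaStr_{\FreeH \arrowTheoryT(\pi)}$, and Theorem~\ref{internalStrucutre}, applied to $\arrowTheoryT(\pi)$, says precisely that this map is an acyclic fibration, i.e.\ an $\mathcal{R}$-map. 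It is moreover an epimorphism, since by the explicit description in the proof of Theorem~\ref{internalStrucutre} the unit $\etaStr$ is the quotient by the relation $\sim$ and is therefore surjective.

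Two loose ends then remain. First, Theorem~\ref{endAcyclic} asks $\strictify$ to preserve the limits entering the $\End$-construction; after dualizing these are colimits in $\globMultId$, which $\strictify$ preserves as a left adjoint. Second, the word \emph{normalized} must be justified: the free object $\FreeH(\bullet)$ on a single $0$-type has a unique $0$-type, hence a unique endomorphism, which forces a unique $0$-term in each endomorphism operad; the same holds after strictification, so both operads are normalized. I expect the main obstacle to be purely bookkeeping: tracking the variance in $X : \glob^\op \to \globMultId^\op$ carefully enough that the boundary maps and the limits of the $\End$-construction land on the side where $\FreeH$'s preservation of cofibrations and Theorem~\ref{internalStrucutre} apply verbatim. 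The one genuinely substantive input, the acyclic-fibration condition, has already been supplied by Theorem~\ref{internalStrucutre}.
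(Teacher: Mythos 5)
Your proposal is correct and is essentially the paper's own proof: the paper likewise obtains the statement by applying Theorem~\ref{endAcyclic} to the globular object $\FreeH^\op \arrowTheoryT$ with the adjunction $\strictify \dashv U$ and its unit $\etaStr$, merely asserting that the hypotheses are ``easily seen'' to hold. Your verification of those hypotheses (faithfulness of $U$, cofibrancy of the boundary inclusions via preservation of cofibrations by $\FreeH$, and the $\mathcal{R}$-map/epimorphism condition via Theorem~\ref{internalStrucutre} and cocontinuity of $\FreeH$) is exactly the intended reading, just spelled out in more detail than the paper provides.
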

\begin{proof}
  The globular object $ \FreeH^\op \arrowTheoryT$,
  together with the adjunction
  \begin{equation*}
    \begin{tikzcd}
      \globMultId^\op
      \ar[r, bend left, "\strictify" above]
      \ar[r, phantom, "\bot" description]
      &
      \globMultStrId \ar[l, bend left,  "U" below]
    \end{tikzcd}
  \end{equation*}
  and the natural transformation $\eta :  \id \Rightarrow  U \strictify$ are easily seen to satisfy the conditions of Theorem \ref{endAcyclic}.
\end{proof}

Thus, it suffices to show that $\End(\strictify^\op \FreeH^\op \arrowTheoryT)$ is the terminal globular operad.
This is really an abstract expression of the well known fact that the collection of all strict $\omega$-categories is a strict $\omega$-category.

\begin{lemma}
  Suppose that $X$ is a reflexive globular multigraph and that $Y = S \FreeH X$.
  Let $0 \leq k < n$ and suppose that $\beta_Y(f), \beta_Y(g)$ are parallel terms in $V G Y$
  such that $t_k \beta_Y(f) = s_k \beta_Y(g)$.
  Then we can always choose substitutions $f, g$ in $Y$ such that $t_k f = s_k g$.
  In this case we have that $\beta_Y(f \odot_k g) = \beta_Y(f) \otimes \beta_Y(g)$.
\end{lemma}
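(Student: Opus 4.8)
The plan is to run everything through the explicit descriptions of $\FreeCat X = \FreeCatStr \strictify \FreeH X$ and of the unit $\beta_Y$ recorded in Lemma~\ref{monGlobCatLemma}. The key point is that $\beta_Y$ sends each homomorphism type $\IdType_A$ to the monoidal unit $Z(\beta_Y A)$ and each reflexivity term $r_A$ to an identity, since $\beta_Y$ is a homomorphism in $\globMultStrId$ and $VGY$ underlies a strict monoidal globular category. Consequently, inserting or deleting identity types is invisible to $\beta_Y$: as $r_x ; J_x(f) = f$ we get $\beta_Y(J_x(f)) = \beta_Y(f)$, and similarly $\beta_Y(r_x ; h) = \beta_Y(h)$ because $\beta_Y(r_x)$ is a pasting of identities. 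The only obstruction to lifting a composable pair in $VGY$ to a composable pair in $Y$ is therefore a mismatch of identity-type variables between the two pasting-diagram boundaries, which I will remove by hand before appealing to strict reflection of identities.

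First I would unwind the hypothesis $t_k \beta_Y(f) = s_k \beta_Y(g)$. Forming the monoidal composite $\beta_Y(f) \otimes_k \beta_Y(g)$ in $\FreeCatStr Y$ only requires the $\bigotimes$-evaluated $k$-boundaries to agree, and evaluation by $\bigotimes$ absorbs units, i.e. identity types. Hence the genuine pasting-diagram boundaries $t_k f$ and $s_k g$, computed in $Y$, agree once their identity-type variables are deleted; they share a common core and differ only by inserted identity types (and, at the top, by $\IdType$-wrappings of the output type, which are controlled the same way).

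The main step, and the principal obstacle, is to reconcile these two boundaries inside $Y$. I would form the common context carrying every identity-type variable occurring in either boundary and reach it from each side by inserting the missing identity types, replacing $f$ by $f'$ and $g$ by $g'$. Insertion is performed with J-terms; at an interior $k'$-variable with $0 < k' < n$ one must use the two-sided J-term $J^{j_s, j_t}_x$, choosing $j_s, j_t$ so that the source and target $k$-boundaries of the modified term come out as required --- this is exactly what both forms of the J-rule are for. By the first paragraph these replacements do not disturb $\beta_Y(f)$ or $\beta_Y(g)$. After the reconciliation $t_k f'$ and $s_k g'$ are parallel $k$-terms with equal $\beta_Y$-image, so strict reflection of identities (Lemma~\ref{monGlobCatLemma}\ref{strictIdentities}) forces $t_k f' = s_k g'$, and thus $f' \odot_k g'$ is defined in $Y$. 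The delicate bookkeeping is to arrange the insertions so that all lower boundaries $s_j, t_j$ with $j \le k$ are matched simultaneously and insertions at distinct variables do not interfere; I would organize this as an induction on dimension (equivalently, on the number of discrepant identity types), peeling off one identity type at a time exactly as in the proof of Lemma~\ref{monGlobCatLemma}.

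Finally, the identity $\beta_Y(f' \odot_k g') = \beta_Y(f') \otimes_k \beta_Y(g')$ is the easy part. The map $\beta_Y$ is a homomorphism of globular multicategories, so it preserves the pasting operation $\odot_k$; and in $VGY$ the $\odot_k$-paste of two terms is, by the construction of $V$ and $\FreeCatStr$, sent to the monoidal composite $\otimes_k$ once the target $k$-composite context is evaluated by $\bigotimes$. Coherence for strict monoidal globular categories removes any residual bracketing ambiguity, giving the stated equation.
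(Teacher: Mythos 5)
Your proposal reaches the lemma by a genuinely different route than the paper. The paper's proof never modifies or even reuses the given terms: it invokes the normal-form analysis from the proof of Lemma \ref{monGlobCatLemma} to write $\beta_Y(f) = \gamma_1 ; \cdots ; \gamma_l$ and $\beta_Y(g) = \delta_1 ; \cdots ; \delta_l$ as composites of substitutions in $X$ with $t_k \gamma_i = s_k \delta_i$, lifts the common boundaries to a composable list $\epsilon_i$ in $Y$, then lifts each $\gamma_i$, $\delta_i$ to $\tilde\gamma_i$, $\tilde\delta_i$ whose $k$-boundaries are \emph{prescribed} to equal $\epsilon_i$ via the two-sided J-rule, and finally takes $f = \tilde\gamma_1 ; \cdots ; \tilde\gamma_l$ and $g = \tilde\delta_1 ; \cdots ; \tilde\delta_l$, so composability holds by construction and Lemma \ref{monGlobCatLemma}\ref{strictIdentities} is never needed. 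You instead keep the given lifts, reconcile their boundary \emph{contexts} by J-term insertion, and let strict reflection of identities force the boundary \emph{terms} to coincide; this is a legitimate shortcut, and your dimension induction does close (equalizing the $k$-boundary contexts equalizes all lower boundary contexts by globularity, so strict reflection applies from dimension $0$ upward to supply parallelism at each stage). Two points you assert deserve flags. First, your remark that discrepant ``$\IdType$-wrappings of the output type'' can be ``controlled the same way'' is wrong as stated --- no J-term changes a term's output type --- but the case is vacuous, since $\beta_Y$ is injective on types (only $\IdType_A$ and the formal unit $Z(A)$ are identified in $GY$), so the hypothesis already forces $t_k A_f = s_k A_g$; you should say this rather than gesture at a repair that cannot work. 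Second, the existence of a common refinement of two contexts with equal image (confluence of identity-type insertion) is exactly the normal-form analysis the paper's decomposition rests on, so it should be cited as such rather than treated as obvious. With those points made explicit, your argument is sound and is comparable in rigor to the paper's own sketch; what the paper's version buys is that it needs only part of the earlier analysis, while yours additionally consumes Lemma \ref{monGlobCatLemma}\ref{strictIdentities}.
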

\begin{proof}
  It follows from our analysis in the proof of \ref{monGlobCatLemma} that there exist substitutions $\gamma_1, \ldots, \gamma_l$, $\delta_1, \ldots, \delta_l$ in $X$ such that $\beta_Y(f) = \gamma_1 ; \cdots ; \gamma_l$
  and $\beta_Y(g) = \delta_1 ; \cdots ; \delta_{l}$
  and $t_k \gamma_i = s_k \delta i$
  for each $i$.
  We can now use J-terms to find a composable list of substitutions $\epsilon_1, \ldots, \epsilon_l$ in $Y$ such that $\beta_Y(\epsilon_i) = t_k \gamma_i = s_k \delta_i$.
  We can then use J-terms again to find composable lists $\tilde \gamma_1, \ldots \tilde \gamma_l$ and $\tilde \delta_1, \ldots, \tilde \delta_l$ in $Y$ such that $\beta_Y(\tilde \gamma_i) = \gamma_i$ and $\beta_Y(\tilde \delta_i) = \delta_i$ and such that $t_k \tilde \gamma_i = \epsilon_i = t_k \tilde \delta_i$.
  Thus we can set $f = \tilde \gamma_1 ; \cdots \tilde \gamma_l$ and
  $g = \tilde \delta_1 ; \cdots \tilde \delta_l$.
\end{proof}

\begin{definition}
  Suppose that $X$ and $Y$ are as above.
  Suppose furthermore that every term of $X$ is of the form $f : A \to \IdType^n_B$ where $A$ and $B$ are $0$-types.
  In this case we say that $Y$ is an \textbf{arrow theory}.
  We denote the category of arrow theories and homomorphism type preserving maps by $\arrowTheory$ .
\end{definition}
\begin{prop}\label{arrowComposites}
Suppose that $f, g$ are terms in an arrow theory $Y$ such that $t_k f = s_k g$.
Then there exists a term $h$ in $Y$ such that
\[
  \begin{aligned}
    \beta_Y(h) = \beta_Y(f) \otimes \beta_Y(g)
  \end{aligned}.
\]
\end{prop}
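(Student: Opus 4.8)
The plan is to reduce this statement to the immediately preceding lemma, which already does the substantial work. That lemma, applied to an arbitrary reflexive globular multigraph $X$ and $Y = S\FreeH X$, tells us that whenever $\beta_Y(f)$ and $\beta_Y(g)$ are parallel composable terms in $VGY$ with $t_k\beta_Y(f) = s_k\beta_Y(g)$, we may choose representatives $f, g$ in $Y$ with $t_k f = s_k g$, and that then $\beta_Y(f \odot_k g) = \beta_Y(f)\otimes\beta_Y(g)$. So the entire content of the present proposition is that, \emph{in the arrow theory case}, we can take $h = f \odot_k g$ and recognize it as an honest term rather than merely a substitution.

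First I would unwind the hypothesis that $Y$ is an arrow theory. By definition every term of the underlying $X$ has the form $A \to \IdType^n_B$ with $A, B$ being $0$-types, so every generating term is globe-shaped with a $0$-type source context. This means the composable terms $f, g$ we are handed have globe shapes, and the pasting composite $f \odot_k g$ along a shared $k$-cell is again globe-shaped: the source context $\bigodot\Gamma_i$ collapses to a single $0$-type context, so $f \odot_k g$ is a term $\Gamma \to A$ rather than a genuinely wide substitution. Concretely I would check that gluing two globes along a common $k$-face yields a globe (up to the reflexivity/$J$-term normalization already invoked in Lemma \ref{monGlobCatLemma}), so that $f \odot_k g$ lands in $X_1(n)$, i.e. is a term in the sense required by the statement.

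Then I would apply the preceding lemma directly to the given $f, g$. Since $t_k f = s_k g$ is assumed, the hypotheses of the lemma are met with these very representatives, and it yields
\[
  \beta_Y(f \odot_k g) = \beta_Y(f)\otimes\beta_Y(g).
\]
Setting $h = f \odot_k g$, which by the previous paragraph is a term of $Y$, completes the argument.

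The main obstacle I expect is the verification that $f\odot_k g$ really is a term and not just a substitution with a wider pasting-diagram source. In a general globular multicategory $f \odot_k g$ would have source context $\Gamma_f \odot_k \Gamma_g$, a non-globe pasting diagram, and so would only be a substitution; it is precisely the arrow-theory restriction---that all source contexts are $0$-types---that forces the shared boundary to be a $0$-cell and collapses the glued source to a single globe-shaped context. I would therefore spend the care of the proof confirming that this collapse is legitimate, invoking the normalization of terms via reflexivity and $J$-terms established in the proof of Lemma \ref{monGlobCatLemma} to absorb any homomorphism-type variables introduced along the shared boundary, so that the result is unambiguously an element of $X_1(n)$.
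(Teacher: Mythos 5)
There is a genuine gap, and it sits exactly where you yourself located the ``main obstacle'': your claim that, in an arrow theory, $f \odot_k g$ collapses to an honest term is false, and no amount of care about source contexts can repair it. The obstruction to $f \odot_k g$ being a term is not its source but its \emph{target}: $f : \Gamma \to \IdType^n_A$ and $g : \Delta \to \IdType^m_A$ each have a single type as target, but the substitution $f \odot_k g$ has as target the \emph{context} $\IdType^n_A \odot_k \IdType^m_A$, a pasting diagram containing two types glued along a shared $k$-cell. In a globular multicategory there is no operation composing two types into one --- that is precisely the structure ($\otimes_k$) which $V G Y$ has and $Y$ lacks --- so this target cannot collapse to a single type, in an arrow theory or anywhere else; the arrow-theory hypothesis constrains what the types are, not how many of them sit in the target. (Your auxiliary claim that gluing two globes along a common $k$-face yields a globe is also false in general: that gluing is the basic example of a non-trivial pasting diagram.) Consequently setting $h = f \odot_k g$ does not produce a term, and the preceding lemma's equation $\beta_Y(f \odot_k g) = \beta_Y(f) \otimes \beta_Y(g)$, while correct, is an equation about a substitution and so does not finish the proof. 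The vague appeal to ``normalization via reflexivity and $J$-terms'' from Lemma \ref{monGlobCatLemma} does not help here either, since that normalization rewrites source data and cannot merge the two target types.

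The missing idea --- which is essentially the entire content of the paper's proof --- is to turn the substitution into a term by post-composing with a composition operation manufactured from the J-rules. Because $Y$ is an arrow theory, the targets of $f$ and $g$ are iterated homomorphism types over a common $0$-type $A$, and one can use J-terms to build a term $m : \IdType^n_A \odot_k \IdType^m_A \to \IdType^m_A$ satisfying $\beta_Y(m) = \beta_Y(r^m_A)$, so that under $\beta_Y$ the term $m$ becomes a degenerate (unit-law) cell in $V G Y$. Then $h = (f \odot_k g) ; m$ \emph{is} a term of $Y$, and $\beta_Y(h) = (\beta_Y(f) \odot_k \beta_Y(g)) ; \beta_Y(m) = \beta_Y(f) \otimes_k \beta_Y(g)$ as required. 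So the arrow-theory hypothesis is used to write down $m$, not to collapse pasting diagrams; without constructing some such $m$ your argument cannot produce the required term.
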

\begin{proof}
  We must that $f : \Gamma \to \IdType^n_A$ and $g : \Delta \to \IdType^m_A$ are substitutions where $A$ is a $0$-type in $Y$ and $0 \leq n \leq m$.
  Suppose that $0 \leq k < m$.
  Then using J-rules we can construct a term
  \[
    m :  \IdType^n_A \odot_k  \IdType^m_A \to \IdType^m_A
  \]
  such that $\beta_Y(m) = \beta_Y(r^m_A)$.
  It follows that
  \[
    \begin{aligned}
      \beta_Y((f \odot_k g) ; m) =  (\beta_Y(f) \odot_k \beta_Y(g)) ; \beta_Y(m)
      = \beta_Y(f) \odot_k \beta_Y(g)
    \end{aligned}.
  \]
  and this is the same as $\beta_Y(f) \otimes_k \beta_Y(g)$.
  Hence $\beta_{Y}$ is surjective on types and terms.
\end{proof}
\begin{corollary}\label{classifyArrowTheories}
We have a fully faithful functor
\[
  \begin{aligned}
   \mathbb I: \arrowTheory \longrightarrow \StrOmegaCat
 \end{aligned}.
\]
\end{corollary}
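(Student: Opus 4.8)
The plan is to construct $\mathbb I$ explicitly by extracting the strict $\omega$-category hidden in the arrow-terms of an arrow theory, and then to establish full-faithfulness by showing that a homomorphism of arrow theories is exactly the data of a strict $\omega$-functor between the extracted $\omega$-categories.

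First I would define $\mathbb I(Y)$ for an arrow theory $Y = \strictify \FreeH X$. Its objects are the $0$-types of $Y$, and for $n \geq 1$ its $n$-cells are the arrow-terms $f : A \to \IdType^n_B$ with $A, B$ ranging over $0$-types. The source and target of such a cell are the $(n-1)$-terms $sf, tf : A \to \IdType^{n-1}_B$, which are again arrow-terms, so the globularity identities are immediate. For the $k$-fold composite of cells $f, g$ with $t_k f = s_k g$ I would set $f \circ_k g = (f \odot_k g) ; m$, where $m$ is the multiplication term supplied by Proposition \ref{arrowComposites}, and I would take the identity cells to be the appropriate composites of reflexivity terms $r_A$. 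To verify the strict $\omega$-category axioms I would pass through the strict monoidal globular category $\FreeCatStr Y$: Proposition \ref{arrowComposites} tells us that $\beta_Y$ is surjective on types and terms and that $\beta_Y(f \circ_k g) = \beta_Y(f) \otimes_k \beta_Y(g)$, so $\mathbb I(Y)$ is precisely the sub-structure of $\FreeCatStr Y$ spanned by the objects $Z^n A$ and the morphisms between them. Since $\FreeCatStr Y$ is a \emph{strict} monoidal globular category, the associativity, unit, and interchange laws hold on the nose, and strictness of the homomorphism types guarantees that the chosen composite representatives are unique; hence these laws transport back to $\mathbb I(Y)$, making it a genuine strict $\omega$-category.

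Functoriality is then routine: a homomorphism $\Phi : Y \to Y'$ of arrow theories preserves $0$-types, arrow-terms, the operations $\odot_k$ and $;$, reflexivity terms and $J$-terms, hence preserves $\circ_k$ and identities, so it induces a strict $\omega$-functor $\mathbb I(\Phi)$. For faithfulness I would invoke the normal-form analysis from the proof of Lemma \ref{monGlobCatLemma}: every term of $Y$ is, up to the strict equality imposed by $\strictify$, a composite $\gamma_1 ; \cdots ; \gamma_l$ of generating substitutions from $X$ together with reflexivity and $J$-terms. Because $\Phi$ commutes with all of these constructors, it is determined by its values on the generators coming from $X$, namely on $0$-types and on the arrow-terms of $X$; these values are exactly what $\mathbb I(\Phi)$ records on objects and cells, so $\mathbb I(\Phi) = \mathbb I(\Phi')$ forces $\Phi = \Phi'$.

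The fullness step is where I expect the real work to lie. Given a strict $\omega$-functor $F : \mathbb I(Y) \to \mathbb I(Y')$, I would define $\Phi$ on the generating data by $F$ and extend it along the free and strictified structure of $Y = \strictify \FreeH X$. The crux is well-definedness: one must check that the relations imposed in forming $\strictify \FreeH X$ are precisely the strict $\omega$-category relations (associativity, unit and interchange, together with the identifications coming from strict homomorphism types), so that $F$, which respects composition and units on the nose, automatically respects them. Here both halves of Proposition \ref{arrowComposites} are essential: surjectivity of $\beta_Y$ ensures that every $\omega$-categorical composite in $\mathbb I(Y)$ is represented by an actual term on which $\Phi$ must be specified, while strictness of the homomorphism types ensures that such a representative is essentially unique, so the assignment does not depend on the chosen presentation. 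Verifying that this extension is a homomorphism and that $\mathbb I(\Phi) = F$ then completes the argument, yielding the fully faithful functor $\mathbb I : \arrowTheory \longrightarrow \StrOmegaCat$ of Corollary \ref{classifyArrowTheories}.
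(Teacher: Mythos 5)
There is one concrete error in your construction, and it is not cosmetic: your indexing of cells. You take $0$-cells to be $0$-types and, for $n \geq 1$, $n$-cells to be the $n$-terms $f : A \to \IdType^n_B$, with source and target the $(n-1)$-terms $sf, tf : A \to \IdType^{n-1}_B$. At $n = 1$ this breaks down: the source and target of a $1$-term $A \to \IdType_B$ are $0$-terms $A \to B$, not $0$-types, so the globular identities fail at the bottom dimension, and the $0$-terms never appear as cells of $\mathbb I(Y)$ at all. Since the entire point of the corollary (and of the results it feeds, e.g.\ the terminality of $\End(\strictify^\op \FreeH^\op \arrowTheoryT)$) is that $0$-terms become the $1$-cells of the resulting strict $\omega$-category, omitting them destroys the statement. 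The fix is the paper's convention: the $n$-terms of an arrow theory are the $(n+1)$-cells, so that $1$-cells are $0$-terms $A \to B$, $2$-cells are $1$-terms $A \to \IdType_B$, and so on; with this shift your source/target assignments become consistent.

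Apart from this, your argument rests on exactly the same two pillars as the paper's proof: surjectivity of $\beta_Y$ on types and terms (Proposition \ref{arrowComposites}) and injectivity (strict reflection of identities, Lemma \ref{monGlobCatLemma}\ref{strictIdentities}), which together let the strict composition of $\FreeCatStr Y$ be transported back onto the terms of $Y$. Note that your phrase ``strictness of the homomorphism types guarantees that the chosen composite representatives are unique'' is really an appeal to Lemma \ref{monGlobCatLemma}\ref{strictIdentities}; the strictness axiom alone does not directly give injectivity of $\beta_Y$, so you should cite that lemma here and not only in the faithfulness step. The packaging does differ: the paper promotes the isomorphism $\beta_Y : Y \to V \FreeCatStr Y$ to a fully faithful functor $B : \arrowTheory \to \monGlobCat$ and then identifies its image with the strict $\omega$-categories freely generated by reflexive globular sets, whereas you build the $\omega$-category directly and prove full faithfulness by hand. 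Your route is viable, but your fullness step is harder than it needs to be: since $Y = \strictify \FreeH X$ is \emph{free} on the reflexive globular multigraph $X$, a homomorphism $\Phi : Y \to Y'$ is exactly a map of reflexive globular multigraphs $X \to Y'$, so there are no relations to verify at all; the only remaining check is that $\mathbb I(\Phi) = F$, which holds because the cells of $\mathbb I(Y)$ are generated under the $\omega$-categorical operations by those coming from $X$, and two strict $\omega$-functors agreeing on generators agree. This universal-property argument replaces your proposed comparison of ``the relations imposed in forming $\strictify \FreeH X$'' with the strict $\omega$-category relations, which as stated is vague and would in effect require re-proving the paper's Lemma \ref{monGlobCatLemma}.
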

\begin{proof}
  It follows from Lemma \ref{monGlobCatLemma}\ref{strictIdentities} that $\beta_{Y}$ is injective on types and terms.
  Hence the homomorphism $\beta_{Y}$ is bijective on types and terms.
  It follows that there is an induced strict monoidal globular category structure on $Y$ such that
  $\beta : Y \to VG Y$ is an isomorphism of strict monoidal globular categories.
  Moreover every map preserving homomorphism types preserves the  $-\otimes_k-$  operations defined using Lemma \ref{classifyArrowTheories}.
  Hence,
  we have a fully faithful functor
  \[
   \begin{aligned}
    B : \arrowTheory \longrightarrow \monGlobCat
  \end{aligned}.
\]
The monoidal globular categories in the image of $B$ are precisely those with no non-trivial $n$-types for $n > 1$ and which are freely generated by a reflexive globular multigraph.
However, such monoidal globular multicategories are the same as strict $\omega$-categories freely generated by reflexive globular sets.
(We view the $0$-types of an arrow theory as the $0$-cells of a reflexive globular set.
We view the $n$-terms of an arrow theory as the $(n+1)$-cells of this reflexive globular set.)
Hence, we have a fully faithful functor $\mathbb I : \arrowTheory \to \StrOmegaCat$
to the category of strict of $\omega$-categories.
\end{proof}

\begin{prop}
 The globular operad $\End(\strictify^\op \FreeH^\op \arrowTheoryT)$ is the terminal globular operad.
\end{prop}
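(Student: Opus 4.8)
The plan is to show that $\End(\strictify^\op \FreeH^\op \arrowTheoryT)$ has exactly one $\pi$-shaped $n$-term for every $\pi \in \pd(n)$, which is precisely the defining property of the terminal globular operad. The first step is to identify the underlying globular object. Each $\strictify^\op \FreeH^\op \arrowTheoryT(n) = \strictify \FreeH \arrowTheoryT(n)$ is an arrow theory: for $n = 0$ it is generated by a single $0$-type, while for $n > 0$ it is generated by the single term $u_{n-1}$. By Corollary \ref{classifyArrowTheories} the functor $\mathbb{I}$ identifies these arrow theories with strict $\omega$-categories, and under this identification $\strictify \FreeH \arrowTheoryT(n)$ is the free strict $\omega$-category on the $n$-globe (the point when $n = 0$, the walking $n$-cell when $n > 0$, since $u_{n-1}$ corresponds to a single $n$-cell). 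Thus $\strictify^\op \FreeH^\op \arrowTheoryT$ is, up to the identification $\mathbb{I}$, the standard globular object $\mathbf{G}$ of globes in $\StrOmegaCat$, regarded in the opposite category as a globular object must be.

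Next I would unwind the definition of $\End$. A $\pi$-shaped $n$-term is a boundary-respecting term from the cocontinuous extension evaluated at $\pi$ to the $n$-type $\mathbf{G}_n$. Using the identification above together with the duality exchanging colimits in $\StrOmegaCat$ with limits in its opposite, such a term corresponds to a boundary-respecting strict $\omega$-functor $\mathbf{G}_n \to F$, where $F$ is the free strict $\omega$-category on the pasting diagram $\pi$; equivalently, by Proposition \ref{arrowComposites}, it corresponds to a composite in the arrow theory presenting $F$. Such a functor is the same as an $n$-cell of $F$ whose iterated source and target boundaries are prescribed.

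With this correspondence in hand, existence and uniqueness are supplied by the earlier results. For existence, the pasting diagram $\pi$ admits a canonical total composite: an $n$-cell of $F$ whose source and target are the composites of $s_{n-1}\pi$ and $t_{n-1}\pi$. This composite is produced by iterating the composition operation guaranteed by Proposition \ref{arrowComposites}, and it furnishes the required $\pi$-shaped $n$-term; in the case $\pi = 0$ the only such term is the identity, so the operad is normalized. For uniqueness I would invoke strictness: by Lemma \ref{monGlobCatLemma}\ref{strictIdentities} the comparison $\beta$ strictly reflects identities, so that composites in the arrow theory, equivalently $n$-cells of $F$ with prescribed boundary that use all of $\pi$, are genuinely unique, as they are in any strict $\omega$-category. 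Hence there is exactly one $\pi$-shaped $n$-term for each $\pi$, and the operad is terminal.

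The main obstacle is the careful bookkeeping of the two nested layers of opposite categories (the one implicit in the notion of globular object and of $\spanT$, and the one arising from $\strictify^\op$ and $\FreeH^\op$) and, within that, verifying rigorously that a boundary-respecting $\pi$-shaped term is forced to be the \emph{total} composite of $\pi$ rather than a composite of some proper sub-diagram, that is, that the arity $\pi$ genuinely requires every cell of $\pi$ to be composed. Once this correspondence is pinned down, existence reduces to Proposition \ref{arrowComposites} and uniqueness to the strictness established in Lemma \ref{monGlobCatLemma}\ref{strictIdentities}, and the remaining verifications are purely organizational.
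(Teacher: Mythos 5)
Your reduction is essentially the paper's: identify a $\pi$-shaped $n$-term of $\End(\strictify^\op \FreeH^\op \arrowTheoryT)$ with a boundary-preserving homomorphism of arrow theories $\strictify \FreeH \arrowTheoryT(n) \to \strictify \FreeH \arrowTheoryT(\pi)$, and then use the fully faithful functor $\mathbb I$ of Corollary \ref{classifyArrowTheories} to convert this into a boundary-preserving strict $\omega$-functor $Tn \to T\pi$, i.e.\ an $n$-cell of $T\pi$ with prescribed iterated sources and targets. Your handling of the two layers of opposites and your identification of $\strictify\FreeH\arrowTheoryT(n)$ with the free strict $\omega$-category on the $n$-globe are correct, and your existence argument via Proposition \ref{arrowComposites} is consistent with the paper.

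The genuine gap is the uniqueness step. You attribute uniqueness to Lemma \ref{monGlobCatLemma}\ref{strictIdentities}, i.e.\ to strictness, on the grounds that composites ``that use all of $\pi$'' are ``genuinely unique, as they are in any strict $\omega$-category.'' This conflates two different statements. Strict reflection of identities by $\beta$ gives \emph{well-definedness} of the total composite: all ways of composing the diagram $\pi$ agree. What terminality requires is stronger: that \emph{every} $n$-cell of $T\pi$ whose $k$-source is the composite of $s_k\pi$ and whose $k$-target is the composite of $t_k\pi$, for all $k<n$, is necessarily the total composite of $\pi$. That statement is false in a general strict $\omega$-category (a strict $2$-category may have many $2$-cells with identical boundary); it is a special combinatorial property of the \emph{free} strict $\omega$-category on a pasting diagram. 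Indeed it is exactly the clause you yourself flag as ``the main obstacle'' --- that a boundary-respecting term is forced to be the total composite rather than a composite of a proper sub-diagram --- and then set aside as ``purely organizational,'' with no argument supplied. The paper closes precisely this gap by observing that $\mathbb I(v) : Tn \to T\pi$ is an \emph{active} functor in the sense of Haugseng and invoking the fact that there is a unique active functor of this form. Without that citation, or a direct induction on the inductive structure of pasting diagrams proving the corresponding combinatorial fact, your argument does not establish terminality.
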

\begin{proof}
  By definition a $\pi$-shaped $n$-term in this operad is a boundary preserving  homomorphism of arrow theories
  \[
    \begin{aligned}
      v : \strictify \FreeH \arrowTheoryT(n) \longrightarrow \strictify \FreeH \arrowTheoryT(\pi)
    \end{aligned}
  \]
  Using the fully faithful functor  $\mathbb I : \arrowTheory \longrightarrow \StrOmegaCat$ of Corollary \ref{classifyArrowTheories},
  this homomorphism of arrow theories corresponds to
  a boundary preserving functor
  \[
    \begin{aligned}
      \mathbb I(v) : T n \longrightarrow T \pi
    \end{aligned}.
  \]
  from the strict $\omega$-category generated by a single $n$-cell to the strict $\omega$-category generated by the pasting diagram $\pi$.
  In the terminology of \cite{RuneHaugseng2018Oteb}, the functor $\mathbb I(v)$ is \emph{active}.
  It follows that there is a unique functor of this form and so $v$ is uniquely determined.
\end{proof}

\section*{Acknowledgements}
I would like thank my supervisor Kobi Kremnitzer for many useful conversations.
This work at the University of Oxford was supported by an EPSRC studentship.

\bibliography{ms.bib}

\begin{thebibliography}{10}

\bibitem{Batanin1998Monoidal-Globul}
M.A. Batanin.
\newblock Monoidal globular categories as a natural environment for the theory
  of weakn-categories.
\newblock {\em Advances in Mathematics}, 136(1):39 -- 103, 1998.

\bibitem{MR2294028}
Denis-Charles Cisinski.
\newblock {\em Les pr\'efaisceaux comme mod\`eles des types d'homotopie}.
\newblock Number 308 in Ast\'erisque. Soci\'et\'e math\'ematique de France,
  2006.

\bibitem{Cruttwell2010A-unified-frame}
G.S.H. Cruttwell and Michael~A. Shulman.
\newblock A unified framework for generalized multicategories.
\newblock {\em Theory and Applications of Categories}, 24(21):580--655, 2010.

\bibitem{Garner2009A-homotopy-theo}
Richard Garner.
\newblock A homotopy-theoretic universal property of leinster's operad for weak
  $\omega$-categories.
\newblock {\em Mathematical Proceedings of the Cambridge Philosophical
  Society}, 147:615--628, 11 2009.

\bibitem{RuneHaugseng2018Oteb}
Rune Haugseng.
\newblock On the equivalence between $\theta_{n}$-spaces and iterated segal
  spaces.
\newblock {\em Proceedings of the American Mathematical Society},
  146(4):1401--1415, 2018.

\bibitem{Kachour2015Operads-of-high}
Camell Kachour.
\newblock Operads of higher transformations for globular sets and for higher
  magmas.
\newblock {\em Categories and General Algebraic Structures with Applications},
  3(1):89--111, 2015.

\bibitem{LeinsterHigher-Operads-}
T.~Leinster.
\newblock {\em Higher Operads, Higher Categories}.
\newblock Number 298 in London Mathematical Society Lecture Note Series.
  Cambridge, UK: Cambridge University Press, August 2004.

\bibitem{LumsdaineHigher-categori}
Peter~LeFanu Lumsdaine.
\newblock Higher categories from type theories.
\newblock {\em PhD thesis Carnegie Mellon University}, page 104, 2010.

\bibitem{Lumsdaine2010Weak-omega-cate}
Peter~LeFanu Lumsdaine.
\newblock Weak omega-categories from intensional type theory.
\newblock {\em Logical Methods in Computer Science}, Volume 6, Issue 3, Sep
  2010.

\bibitem{Riehl2017Kan-extensions-}
Emily Riehl and Dominic Verity.
\newblock Kan extensions and the calculus of modules for {$\infty$}-categories.
\newblock {\em Algebr. Geom. Topol.}, 17(1):189--271, 2017.

\bibitem{Berg2011Types-are-weak-}
Benno van~den Berg and Richard Garner.
\newblock Types are weak $\omega$-groupoids.
\newblock {\em Proceedings of the London Mathematical Society},
  102(2):370--394, 2011.

\end{thebibliography}
\bibliographystyle{plain}

\end{document}